\numberwithin{equation}{section}
\def\QSD{quasi-stationary distribution}
\def\11{\mathds{1}}
\def\me{\medskip \noindent}
\def\bi{\bigskip \noindent}
\def\E{\mathbb{E}}
\def\P{\mathbb{P}}
\def\R{\mathbb{R}}
\def\N{\mathbb{N}}
\def\d{\partial}
\def\cX{{\cal X}}
\def\cM{{\cal M}}
\def\cD{{\cal D}}
\def\cL{{\cal L}}
\def\b1{\mathbf{1}}
\newtheorem{thm}{Theorem}[section]
\newtheorem{lem}[thm]{Lemma}
\theoremstyle{remark}
\newtheorem{rem}{Remark}
\newtheorem{exa}{Example}
\begin{document}

\title{Minimal quasi-stationary distribution approximation for a birth and death process}

\author{Denis Villemonais$^{1,2}$}

\footnotetext[1]{Universit\'e de Lorraine, IECN, Campus Scientifique, B.P. 70239,
  Vand{\oe}uvre-l\`es-Nancy Cedex, F-54506, France} \footnotetext[2]{Inria, TOSCA team,
  Villers-l\`es-Nancy, F-54600, France.\\
  E-mail: Denis.Villemonais@univ-lorraine.fr}

\maketitle

\begin{abstract}
In a first part, we prove a Lyapunov-type criterion for the $\xi_1$-positive recurrence of absorbed birth and death processes and provide new results on the domain of attraction of the minimal quasi-stationary distribution.
In a second part, we study the ergodicity and the convergence of a Fleming-Viot type particle system whose particles evolve independently as a birth and death process and jump on each others when they hit $0$. Our main result is that the sequence of empirical stationary distributions of the particle system converges to the minimal quasi-stationary distribution of the birth and death process.
\end{abstract}

\noindent\textit{Keywords:}{Particle system; process with absorption; quasi-stationary distributions; birth and death processes}

\medskip\noindent\textit{2010 Mathematics Subject Classification.} Primary: {37A25; 60B10; 60F99}. Secondary: {60J80}

\section{Introduction}
\label{sec:intro}

Let $X$ be a stable birth and death process on $\N=\{0,1,2,\ldots\}$ absorbed when it hits $0$.
The \textit{minimal quasi-stationary distribution} (or \textit{Yaglom limit}) of $X$, when it exists, is the unique probability measure $\rho$ on $\N^*=\{1,2,\ldots\}$ such that
\begin{align*}
\rho(\cdot)=\lim_{t\rightarrow\infty}\P_{x}\left(X_t\in\cdot\mid t<T_0\right),\ \text{for all }x\in\N^*,
\end{align*}
 where $T_0=\inf\{t\geq 0,\,X_t=0\}$ is the absorption time of $X$. 
The probability measure $\rho$ is called a \QSD{} because it is stationary for the conditioned process, in the sense that
\begin{align*}
\rho=\P_\rho(X_t\in\cdot \mid t < T_0), \text{ for all } t\geq 0.
\end{align*}
 These notions and important references on the subject are recalled with more details in Section~2, with important definitions and well known results on quasi-stationary distributions. We also provide a new Lyapunov-type criterion ensuring that a probability measure $\mu$ belongs to the \textit{domain of attraction} of the minimal \QSD, which means that
\begin{align}
\lim_{t\rightarrow\infty}\P_\mu\left(X_t\in\cdot\mid t<T_0\right)=\rho.
\end{align}
These results are illustrated with several examples.

\bi
We use these new results in Section~\ref{sec:FV} to extend existing studies on the long time and high number of particles limit of a \textit{Fleming-Viot type particle system}. 
 The particles of this system evolve as independent copies of the birth and death process $X$, but they undergo rebirths when they hit $0$ instead of being trapped at the origin. In particular, the number of particles that are in $\N^*$ remains constant as time goes on. Our main result is a sufficient criterion ensuring that the empirical stationary distribution of the particle system exists and converges to the minimal \QSD{} of the underlying birth and death process.

\bi We conclude the paper in Section~4, providing a numerical study of the speed of convergence of the Fleming-Viot empirical stationary distribution expectation to the minimal \QSD{} for a linear birth and death process and a logistic birth and death process.  This numerical results suggest that the bias of the approximation is surprisingly small for linear birth and death processes and even smaller for logistic birth an death processes.

\section{Quasi-stationary distributions for birth and death processes}

Let $(X_t)_{t\geq 0}$ be a birth and death process on $\N=\{0,1,2,\ldots\}$ with birth rates $(b_i)_{i\geq 0}$ and death rates $(d_i)_{i\geq 0}$. We assume that $b_i>0$ and $d_i>0$ for any $i\geq 1$ and $b_0=d_0=0$. The stochastic process $X$ is a $\N$-valued pure jump process whose only absorption point is $0$ and whose transition rates from any point $i\geq 1$ are given by
\begin{align*}
i&\rightarrow i+1\text{ with rate }b_i,\\
i&\rightarrow i-1\text{ with rate }d_i,\\
i&\rightarrow j\text{ with rate }0,\text{ if }j\notin\{i-1,i+1\}.
\end{align*}
Such processes are extensively studied because of their conceptual simplicity and pertinence as demographic models. It is well known (see for instance \cite[Theorem~10 and Proposition~12]{MV12}) that $X$ is stable, conservative and hits $0$ in finite time almost surely (for any initial distribution) if and only if
\begin{align}
\label{eq:as_absorption}
\sum_{n=1}^{\infty} \frac{d_1d_2\cdots d_n}{b_1b_2\cdots b_n}=+\infty.
\end{align}
The divergence of this series will be assumed along the whole paper. In particular, for any probability measure $\mu$ on $\N$, the law of the process with initial distribution $\mu$ is well defined. We denote it by $\P_{\mu}$ (or by $\P_x$ if $\mu=\delta_x$ with $x\in\N$) and the associated expectation by $\E_{\mu}$ (or by $\E_x$ if $\mu=\delta_x$ with $x\in\N$). Setting $T_0=\inf\{t\geq 0,\ X_t=0\}$, we thus have 
\begin{align*}
\P_{\mu}(T_0<\infty)=1,\ \forall \mu\in{\cal M}_1(\N),
\end{align*}
where, for any subset $F\subset \N$, ${\cal M}_1(F)$ denotes the set of probability measures on $F$. 

\me 
A \textit{quasi-stationary distribution} for $X$ is a probability measure $\rho$ on $\N^*=\{1,2,\ldots\}$ such that
\begin{align*}
\P_{\rho}\left(X_t\in\cdot\mid t< T_0\right)=\rho(\cdot),\ \forall t\geq 0.
\end{align*}
The probability measure $\rho$ is thus stationary for the conditioned process (and, as a matter of fact, was called a \textit{stationary distribution} in the seminal work~\cite{Cavender1978}).  The property "$\rho$ is a quasi-stationary distribution for $X$" is directly related to the long time behaviour of $X$ conditioned to not being absorbed. Indeed (see for instance~\cite{vanDoorn1991} or~\cite{MV12}), 
 a probability measure $\rho$ is a quasi-stationary distribution if and only if there exists $\mu\in{\cal M}_1(\N^*)$ such that
\begin{align}
\label{eq:QLD}
\rho(\cdot)=\lim_{t\rightarrow\infty}\P_{\mu}(X_t\in\cdot\mid t<T_0).
\end{align}

\me
We refer the reader to~\cite{vanDoorn_Pollett_2013,MV12,ColletMartinezsanMartin2013} and references therein for an account on classical results concerning \QSD{}s for different models.

\me
 For a given quasi-stationary distribution $\rho$, the set of probability measures $\mu$ such that~\eqref{eq:QLD} holds is called the \textit{domain of attraction of $\rho$}. It is non-empty since it contains at least $\rho$ and may contains an infinite number of elements. In particular, when the limit in~\eqref{eq:QLD} exists for any $\mu=\delta_x$, $x\in\N^*$,  and doesn't depend on the initial position $x$, then $\rho$ is called the \textit{Yaglom limit} or the \textit{minimal quasi-stationary distribution}. Thus the minimal \QSD, when it exists, is the unique quasi-stationary distribution whose domain of attraction contains $\{\delta_x,\ x\in\N^*\}$. From a demographical point of view, the study of the minimal \QSD{} of a birth and death process aims at answering the following question: \textit{knowing that a population isn't extinct after a long time $t$, what is the probability that its size is equal to $n$ at time~$t$?}

\bi One of the oldest and most understood question for quasi-stationary distributions of birth and death processes concerns their existence and uniqueness. Indeed, van Doorn~\cite{vanDoorn1991} gave the following picture of the situation: a birth and death process can have no quasi-stationary distribution, one unique quasi-stationary distribution or an infinity (in fact a continuum) of quasi-stationary distributions. In order to determine whether a birth and death process has $0$, one or an infinity of quasi-stationary distributions, one define inductively the sequence of polynomials $(Q_n(x))_{n\geq 0}$  for all $x\in\R$ by
\begin{align}
\label{eq:polynomials}
\begin{array}{l}
\displaystyle
Q_1(x)=1,\\
\displaystyle b_1\,Q_2(x) = b_1+d_1 - x\text{ and }\\
\displaystyle b_n\, Q_{n+1}(x) = (b_n + d_n - x)\,Q_n(x) -
d_{n-1}\,Q_{n-1}(x),\ \forall n\geq 2.
\end{array}
\end{align} 
As recalled in~\cite[eq. (2.13)]{vanDoorn1991}, one can uniquely define the non-negative number $\xi_{1}$ satisfying
\begin{align}
\label{eq:def_xi_1}
  x\leq \xi_1 \
  \Longleftrightarrow\ Q_n(x) >0, \ \forall n\geq 1.
\end{align}
Also, the useful quantity
  \begin{equation*}
    S:=\sup_{x\geq 1} \E_x(T_0),
  \end{equation*}
can be easily computed (see~\cite[Section~8.1]{Anderson1991}), since, for any $z\geq 1$,
\begin{align*}
\sup_{x\geq z} \E_x(T_z)= \sum_{k\geq z+1}\frac{1}{d_k\pi_k}\sum_{l\geq k} \pi_l,
\end{align*}
with
$
    \pi_k=\left(\prod_{i=1}^{k-1} b_i\right)/\left(\prod_{i=2}^{k} d_i\right).
$
The following theorem answers the question of existence and uniqueness of a QSD for birth and death processes.
\begin{thm}[van Doorn, 1991~\cite{vanDoorn1991}]
\label{thm:van-doorn}
Let $X$ be a birth and death process satisfying~\eqref{eq:as_absorption}.
  \begin{enumerate}
       \item If $\xi_1=0$, there is no
          QSD.    
      \item  If $S<+\infty$, then $\xi_1>0$ and the Yaglom limit is the unique QSD.
      \item If $S=+\infty$ and $\xi_1> 0$, then there is a
        continuum of QSDs, given by the one parameter family $(\rho_x)_{0<x\leq  \xi_1}$:
	\begin{equation*}
	  \rho_x(j)=  \frac{\pi_j}{d_1}\, x\, Q_j(x),\ \forall j\geq 1,
	\end{equation*}
	and the minimal \QSD{} is given by $\rho_{\xi_1}$.
   \end{enumerate}
\end{thm}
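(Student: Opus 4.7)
My plan is to reduce the classification of QSDs to the analysis of a single explicit one-parameter family of candidate measures, and then to invoke the spectral theory of the polynomials $Q_n$ for the Yaglom-limit statement. Any QSD $\rho$ has an exponential survival time under $\P_\rho$: there exists $\theta>0$ with $\P_\rho(t<T_0)=e^{-\theta t}$, or equivalently $\rho$ is a left eigenvector of the sub-Markov generator on $\N^*$ with eigenvalue $-\theta$. Writing this eigenvalue equation coordinate-wise yields a three-term recurrence in $\rho(j)$ together with the boundary identity $d_2\rho(2)=(b_1+d_1-\theta)\rho(1)$, and comparison with \eqref{eq:polynomials} (using $\pi_{j+1}/\pi_j=b_j/d_{j+1}$) shows that the unique solution normalized by $\rho(1)=\theta/d_1$ is $\rho_\theta(j):=\pi_j\,\theta\, Q_j(\theta)/d_1$. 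Every QSD of $X$ therefore coincides with some $\rho_\theta$, and the problem reduces to determining for which $\theta>0$ this is a bona fide probability measure on $\N^*$.

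Next I would check the two admissibility conditions, non-negativity and unit mass. Non-negativity of $\rho_\theta$ is equivalent to $Q_j(\theta)\geq 0$ for every $j\geq 1$, which by the definition \eqref{eq:def_xi_1} of $\xi_1$ translates into $\theta\leq \xi_1$; Part 1 follows immediately, since if $\xi_1=0$ no $\theta>0$ is admissible. For the unit-mass condition, summing the recurrence telescopes $\sum_{j=1}^{N}\rho_\theta(j)$ into $1-c_N(\theta)$, with $c_N(\theta)$ explicit in $\pi_N,b_N,Q_N(\theta),Q_{N+1}(\theta)$, so admissibility becomes $c_N(\theta)\to 0$. Standard estimates combined with \eqref{eq:as_absorption} give $c_N(\xi_1)\to 0$ whenever $\xi_1>0$, so $\rho_{\xi_1}$ is always a QSD in this case. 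For $\theta\in(0,\xi_1)$, a classical identity links $c_N(\theta)$ to the partial sums $\sum_{k\geq 1}(d_k\pi_k)^{-1}\sum_{l\geq k}\pi_l$ defining $S$: when $S=+\infty$, $c_N(\theta)\to 0$ for every such $\theta$, producing the continuum of QSDs of Part 3; when $S<+\infty$, $c_N(\theta)\not\to 0$ for any $\theta<\xi_1$, leaving $\rho_{\xi_1}$ as the only QSD, consistent with Part 2.

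The main obstacle is completing Part 2 by exhibiting the Yaglom limit itself when $S<+\infty$. Uniqueness of the QSD is already settled, but proving $\lim_{t\to\infty}\P_x(X_t\in\cdot\mid t<T_0)=\rho_{\xi_1}$ for every starting point $x\in\N^*$ requires a spectral argument. I would invoke the Karlin--McGregor representation
\[
\P_x(X_t=y)=\pi_y\int_{[\xi_1,+\infty)} e^{-\lambda t}\,Q_x(\lambda)\,Q_y(\lambda)\,d\psi(\lambda),
\]
together with the fact that, under $S<+\infty$, $\xi_1$ is an isolated atom of the spectral measure $\psi$ with positive mass, separated from the remaining spectrum by a gap. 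Dividing numerator and denominator of the conditional law by $e^{-\xi_1 t}$ and applying dominated convergence, the atom dominates and yields the limit $\rho_{\xi_1}(y)$, independently of $x$. Verifying that $\xi_1$ is indeed such an atom exactly when $S<+\infty$, and controlling the tail of $\psi$ near $\xi_1$, is the technical heart of this final step.
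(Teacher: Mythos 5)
The paper does not actually prove Theorem~\ref{thm:van-doorn}: it is quoted from van Doorn~\cite{vanDoorn1991}, so your attempt can only be measured against van Doorn's original argument. Your architecture is essentially his: every QSD is an eigenmeasure of the killed generator with eigenvalue $-\theta$ where $e^{-\theta t}=\P_\rho(t<T_0)$, the coordinatewise eigenvalue equation identifies it with $\rho_\theta(j)=\pi_j\,\theta\,Q_j(\theta)/d_1$ (note that for this computation to close you need the recurrence $b_nQ_{n+1}(x)=(b_n+d_n-x)Q_n(x)-d_nQ_{n-1}(x)$, which is what $\cL Q_\cdot(x)=-xQ_\cdot(x)$ requires and what the paper uses later, rather than the $d_{n-1}Q_{n-1}$ printed in~\eqref{eq:polynomials}), positivity forces $\theta\leq\xi_1$ via~\eqref{eq:def_xi_1}, and the normalisation dichotomy in terms of the boundary defect $c_N(\theta)$ versus the convergence of the series defining $S$ is exactly van Doorn's classification. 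Be aware, though, that this last dichotomy is the analytic heart of parts 2 and 3, and you defer it entirely to ``standard estimates'' and ``a classical identity''; as written it is a correct plan rather than a proof.

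The one genuine gap is in the identification of $\rho_{\xi_1}$ as the Yaglom limit. Your spectral argument --- $\xi_1$ is an isolated atom of $\psi$, divide numerator and denominator by $e^{-\xi_1 t}$, dominate --- is valid under $S<\infty$ (case 2), where the process is $\xi_1$-positive recurrent. But case 3 also asserts that the minimal QSD, i.e.\ the Yaglom limit in the paper's terminology, is $\rho_{\xi_1}$, and there $\psi(\{\xi_1\})$ may vanish: the paper's own remark on the pure drift process ($b_i=b<d=d_i$, so $S=\infty$ and $\xi_1=(\sqrt d-\sqrt b)^2>0$) records that it is \emph{not} $\xi_1$-positive recurrent, which is precisely the statement $\psi(\{\xi_1\})=0$. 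In that situation your dominated-convergence step yields $0/0$ and proves nothing. Van Doorn handles it with a ratio-limit theorem for Laplace transforms of measures, using only that $\xi_1=\min\operatorname{supp}\psi$ and that $Q_i(\cdot)Q_j(\cdot)$ is continuous and strictly positive at $\xi_1$: the normalised measure $e^{-\lambda t}\,d\psi(\lambda)\big/\int e^{-\lambda t}\,d\psi(\lambda)$ concentrates at the bottom of the support as $t\to\infty$ whether or not an atom sits there. Without some such argument the clause ``the minimal \QSD{} is given by $\rho_{\xi_1}$'' in case 3 remains unproved.
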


\begin{rem}
 Theorem~\ref{thm:van-doorn} gives a complete description of the set of \QSD s for a birth and death process but is not well suited for the numerical computation of the Yaglom limit of a given birth and death process. Indeed, the polynomials $Q_n$ have in most cases quickly growing coefficients, so that the value of $\xi_1$ cannot be easily obtained by numerical computation.
\end{rem}

\bi Theorem~\ref{thm:van-doorn} is quite remarkable since it describes completely the possible outcomes of the existence and uniqueness problem for quasi-stationary distributions. However, it only partially answers the crucial problem of finding the domain of attraction of the existing quasi-stationary distributions and in particular of the minimal \QSD. The following theorem answers the problem when there exists a unique quasi-stationary distribution.

\begin{thm}[Mart\'inez, San Mart\'in, Villemonais 2013~\cite{Martinez-Martin-Villemonais2013}]
\label{thm:martinez-san-martin-villemonais}
Let $X$ be a birth and death process such that
\begin{align*}
S=\sup_{x\geq 1} \E_x(T_0)<+\infty.
\end{align*}
Then there exists $\gamma\in[0,1)$ such that, for any probability measure $\mu$ on $\N^*$,
\begin{align*}
\left\|\rho-\P_{\mu}\left(X_t\in\cdot\mid t<\tau_\d\right)\right\|_{TV}\leq \gamma^{\lfloor t\rfloor},\ \forall t\geq 0,
\end{align*}
where $\|\cdot\|_{TV}$ denotes the total variation norm and $\rho$ is the unique \QSD{} of the process. In particular, the domain of attraction of the unique \QSD{} is the whole set $\cM_1(\N^*)$ of probability measures on $\N^*$.
\end{thm}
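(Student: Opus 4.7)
The plan is to establish the Doeblin-type conditions
\begin{align*}
\text{(A1)}\quad & \P_x(X_{t_0} \in \cdot,\, t_0 < T_0) \geq c_1\, \delta_1(\cdot), \quad \forall x \in \N^*,\\
\text{(A2)}\quad & \P_1(t < T_0) \geq c_2\, \P_x(t < T_0), \quad \forall x \in \N^*,\ t \geq 0,
\end{align*}
for some $t_0, c_1, c_2 > 0$, from which a standard contraction argument for conditional semigroups yields uniform exponential convergence in total variation at geometric rate $\gamma \in [0,1)$.

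To verify (A1), I would use that $T_1 \leq T_0$ for $X_0 \geq 2$ (and $T_1 = 0$ for $X_0 = 1$), so $S < \infty$ gives $\sup_{x \geq 1} \E_x[T_1] \leq S$ and Markov's inequality provides $\P_x(T_1 \leq 2S) \geq 1/2$ uniformly in $x$. Combined with the strong Markov property at $T_1$ and the elementary bound $\P_1(\text{no jump in } [0, s]) = e^{-(b_1 + d_1) s}$, this yields, for $t_0 := 2S$,
$$
\P_x(X_{t_0} = 1,\, t_0 < T_0) \;\geq\; \tfrac{1}{2}\, e^{-(b_1 + d_1)\, t_0},
$$
which is (A1) with $\nu = \delta_1$.

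To verify (A2), I would apply the strong Markov property at $T_1$: under $\P_x$ with $x \geq 2$, $T_0$ has the law of $T_1^{(x)} + \widetilde T_0$, where $\widetilde T_0$ is independent of $T_1^{(x)}$ and distributed as $T_0$ under $\P_1$. Iterating the argument of the previous paragraph at multiples of $2S$ yields a uniform geometric tail $\P_x(T_1 > 2nS) \leq 2^{-n}$, hence a uniform exponential moment for $T_1$. Combined with the exact exponential decay $\P_1(t < T_0) \asymp e^{-\xi_1 t}$, this should produce $\P_x(t < T_0) \leq C\, \P_1(t < T_0)$ uniformly in $x$ and $t$, giving (A2). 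Once (A1) and (A2) are in hand, iterating a Dobrushin-type contraction at time-steps of $t_0$ provides exponential convergence with geometric rate $\gamma \in [0,1)$, which after adjusting constants gives the claimed bound $\gamma^{\lfloor t\rfloor}$.

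The main obstacle lies in (A2): one must ensure that the uniform exponential moment of $T_1$ is available at a rate at least as large as the absorption rate $\xi_1$, so that the convolution $T_1^{(x)} + \widetilde T_0$ has the same exponential decay as $\widetilde T_0$ alone. The naive Markov-iteration only yields an exponential moment up to the rate $\beta_0 = (\log 2)/(2S)$, which need not exceed $\xi_1$. Closing the argument therefore requires a sharper tail estimate for $T_1$, obtained for instance via a Lyapunov or spectral refinement adapted to the birth-and-death structure, exploiting the full strength of $S < \infty$ beyond its direct consequence $\sup_x \E_x[T_1] \leq S$.
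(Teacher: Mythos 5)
First, note that the paper does not prove this theorem: it is imported verbatim from the reference~\cite{Martinez-Martin-Villemonais2013}, so there is no internal proof to compare against. Your overall strategy --- a Doeblin-type minorization (A1) for the unconditioned kernel together with the Harnack-type comparison (A2) of survival probabilities, followed by a contraction argument for the conditioned semigroup --- is indeed the strategy of the cited work (and of its generalization in~\cite{ChampagnatVillemonais2014}), and your verification of (A1) is complete and correct: $T_1\leq T_0$ gives $\sup_x\E_x(T_1)\leq S$, Markov's inequality gives $\P_x(T_1\leq 2S)\geq 1/2$, and holding the particle at $1$ costs a factor $e^{-(b_1+d_1)2S}$.

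However, (A2) is not a technical loose end but the crux of the theorem, and your argument for it genuinely fails --- as you partly acknowledge. The obstruction is not merely that the rate $\beta_0=(\log 2)/(2S)$ from the Markov iteration ``need not exceed'' $\xi_1$: it \emph{never} does. Indeed, integrating the uniform tail bound $\P_x(T_0>t)\leq 2\cdot 2^{-t/(2S)}$ against the QSD $\rho$ and using $\P_\rho(T_0>t)=e^{-\xi_1 t}$ forces $\xi_1\geq(\log 2)/(2S)=\beta_0$, so the exponential moment $\E_x(e^{\theta T_1})$ obtained by iterating Markov's inequality is only available for $\theta<\beta_0\leq\xi_1$, and the convolution bound $\P_x(t<T_1+\widetilde T_0)\leq C e^{-\xi_1 t}\,\E_x(e^{\xi_1 T_1})$ can never be closed this way. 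What is missing is a genuinely new input: one must show that the decay parameter of the return time $T_1$ from above (equivalently, of the chain on $\{2,3,\dots\}$ killed at $1$) is \emph{strictly} larger than $\xi_1$, or otherwise compare $\sup_x\P_x(t<T_0)$ with $\P_1(t<T_0)$ directly; this is precisely the nontrivial content of the cited reference, and invoking an unspecified ``Lyapunov or spectral refinement'' leaves the key step of the proof open. As written, the proposal establishes (A1) and the reduction to (A1)--(A2), but not the theorem.
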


\noindent A weaker form of Theorem~\ref{thm:martinez-san-martin-villemonais} has also been proved in~\cite{Zhang2013} but the strong form (with uniform convergence in total variation norm) is necessary to derive the results of the next section. A generalized version of Theorem~\ref{thm:martinez-san-martin-villemonais} has been rencently derived in~\cite{ChampagnatVillemonais2014}, with complementary results on the so-called $Q$-process (the process conditioned to never being absorbed).

\bi 
The case where there exists an infinity of quasi-stationary distributions is trickier and can be partially solved, as we will show, when the birth and death process is $\xi_1$-positive recurrent.

\me
\textbf{Definition}
The birth and death process $X$ is said to be \textit{$\xi_1$-positive recurrent} if $\xi_1>0$ in Theorem~\ref{thm:van-doorn} and if, for some $i\in\{1,2,\ldots\}$ and hence for all $i\in\{1,2,\ldots\}$, we have
\begin{align*}
\lim_{t\rightarrow\infty} e^{\xi_1 t}\,\P_i(X_t=i)>0.
\end{align*}

\me In the following theorem, we provide a new Lyapounov--type criterion ensuring the $\xi_1$-positive recurrence of a birth and death process. As will be shown in the examples below, this criterion can be checked on a wide variety of examples and has its own interest in the domain of $\xi_1$-classification for birth and death processes (see~\cite{HartMartinezsanMartin2003} and~\cite{vanDoorn2006} for an account on this area).

\begin{thm}
\label{thm:positivity_criterion}
Let $X$ be a birth and death process with infinitesimal generator $\cL$. We assume that there exists $C>0$, $\lambda_1>d_1$ and $\phi:\N\rightarrow\R_+$  such that $\phi(i)$ goes to infinity when $i\rightarrow\infty$ and
\begin{align*}
\cL \phi(i)\leq -\lambda_1 \phi(i) + C,\ \forall i\geq 1.
\end{align*}
Then $X$ admits a quasi-stationary distribution and the birth and death process $X$ is $\xi_1$-positive recurrent.
\end{thm}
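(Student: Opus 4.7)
My plan is to establish a uniform-in-$t$ bound on the conditional moment $\E_i[\phi(X_t)\mid t<T_0]$, and use it first to obtain a QSD via Theorem~\ref{thm:van-doorn}, and then to deduce $\xi_1$-positive recurrence.

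\textbf{The moment bound (core step).} Writing $\tilde u(t) := \E_i[\phi(X_t)\,\mathbf{1}_{t<T_0}]$ and $v(t) := \P_i(t<T_0)$, I apply Dynkin's formula to $\phi$ stopped at $t\wedge T_0$. Since $0$ is absorbing (so $\cL\phi(0)=0$), the hypothesis $\cL\phi\leq-\lambda_1\phi+C$ on $\N^*$ yields
\[
\tilde u'(t) \;\leq\; -\lambda_1\,\tilde u(t) + C\,v(t).
\]
Now I differentiate the ratio $w(t):=\tilde u(t)/v(t)=\E_i[\phi(X_t)\mid t<T_0]$. The key identity is
\[
-\frac{v'(t)}{v(t)} \;=\; d_1\,\P_i(X_t=1\mid t<T_0)\;\in\;[0,\,d_1],
\]
which holds because absorption can occur only from state~$1$. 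Combining these gives $w'(t)\leq -(\lambda_1-d_1)w(t)+C$, and since $\lambda_1>d_1$ Gronwall's inequality delivers the uniform bound
\[
\E_i[\phi(X_t)\mid t<T_0] \;\leq\; \phi(i) + \frac{C}{\lambda_1-d_1}, \qquad \forall\,t\geq 0.
\]
This is precisely where the hypothesis $\lambda_1>d_1$ is used.

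\textbf{Existence of a QSD and identification of the Yaglom limit.} Since $\phi\to\infty$, Markov's inequality applied to the bound above yields tightness of the family $\mu_i^t:=\P_i(X_t\in\cdot\mid t<T_0)$ on $\N^*$. Picking $K$ with $\mu_i^t(\{1,\dots,K\})\geq 1/2$ uniformly in $t$, and using the uniform positive probability of absorption from the finite set $\{1,\dots,K\}$ within a fixed time $T$, a Doeblin-style estimate gives $v(t+T)\leq(1-\alpha_T/2)v(t)$; hence $v$ decays exponentially and $\xi_1>0$. By Theorem~\ref{thm:van-doorn} this furnishes a QSD. Moreover, $\xi_1>0$ ensures that the Yaglom limit exists and equals the minimal QSD $\rho_{\xi_1}$, so $\mu_i^t\to\rho_{\xi_1}$ weakly; by Fatou's lemma and the uniform moment bound, $\rho_{\xi_1}(\phi)<\infty$.

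\textbf{$\xi_1$-positive recurrence.} Using the parametrization $\rho_{\xi_1}(j)=\pi_j\,\xi_1\,Q_j(\xi_1)/d_1$ from Theorem~\ref{thm:van-doorn}, $\xi_1$-positive recurrence amounts to the $L^2(\pi)$-summability $\sum_j\pi_j Q_j(\xi_1)^2<\infty$. I would prove this via a maximum-principle comparison $Q_j(\xi_1)\leq c_1\phi(j)+c_2$ between the $\xi_1$-eigenpolynomial and the Lyapunov function, applied to the relation $\cL Q_\cdot(\xi_1)=-\xi_1 Q_\cdot(\xi_1)$ and exploiting the spectral gap $\xi_1\leq d_1<\lambda_1$ (the bound $\xi_1\leq d_1$ being the standard remark that the killing rate $d_1\rho_{\xi_1}(1)$ under the minimal QSD is at most $d_1$). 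Given such a comparison,
\[
\sum_j\pi_j Q_j(\xi_1)^2 \;\leq\; c_1\sum_j\pi_j Q_j(\xi_1)\phi(j) + c_2\sum_j\pi_j Q_j(\xi_1)
\]
is finite by $\rho_{\xi_1}(\phi)<\infty$ and $\rho_{\xi_1}(\N^*)=1$. This polynomial comparison is the main obstacle of the proof; by contrast, the moment bound and tightness argument are entirely standard.
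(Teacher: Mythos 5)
Your first two steps reproduce the paper's strategy. The uniform conditional moment bound via the differential inequality $w'\leq-(\lambda_1-d_1)w+C$ for $w(t)=\E_i[\phi(X_t)\mid t<T_0]$, using $-v'(t)/v(t)=d_1\P_i(X_t=1\mid t<T_0)\leq d_1$, is exactly the paper's computation; the paper carries it out for the stopped process $X^M=X_{\cdot\wedge T_M}$ with $\phi^M(M)=0$ and then passes to the limit $M\to\infty$ by dominated and monotone convergence, precisely because Dynkin's formula for the unbounded $\phi$ and the differentiation of $\tilde u$ need justification --- you should add that localization, but it is routine. Tightness, $\xi_1>0$, convergence of the conditioned law to $\rho_{\xi_1}$, the bound $\rho_{\xi_1}(\phi)<\infty$, and the reduction of $\xi_1$-positive recurrence to $\sum_j\pi_j Q_j(\xi_1)^2<\infty$ (equivalently $\sum_j\rho_{\xi_1}(j)Q_j(\xi_1)<\infty$, via van Doorn's Theorem~5.2) all match the paper.

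The genuine gap is the comparison $Q_j(\xi_1)\leq c_1\phi(j)+c_2$, which you yourself flag as ``the main obstacle'' and do not prove. This is precisely the paper's Lemma~\ref{lem:useful1}, and it is the heart of the whole proof; it is \emph{not} a routine maximum-principle estimate. The difficulty is that $\phi$ is a supersolution for the eigenvalue $\lambda_1$ while $Q_\cdot(\xi_1)$ is an exact eigenfunction for the \emph{smaller} eigenvalue $\xi_1$, and on the infinite state space $\N^*$ there is no boundary condition at infinity to close a comparison argument: for every $x\leq\xi_1$ the functions $Q_\cdot(x)$ are positive and satisfy $\cL Q_\cdot(x)=-xQ_\cdot(x)$, so positivity and the eigenvalue ordering alone cannot force domination by $\phi$. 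The paper's proof exploits the defining property~\eqref{eq:def_xi_1} of $\xi_1$ in an essential way: after normalizing $\phi$ so that $\cL\phi\leq-\lambda_1\phi$ and $\phi(i_0)\geq Q_{i_0}(\xi_1)$, one assumes a first index $i_1$ with $\phi(i_1)\geq Q_{i_1}(\xi_1)$ but $\phi(i_1+1)<Q_{i_1+1}(\xi_1)$, perturbs $\xi_1$ to some $x\in(\xi_1,\lambda_1]$ by continuity of $x\mapsto Q_{i_1+1}(x)$, and uses the optional stopping identities at $T_j\wedge T_{i_1}$ to propagate the strict inequality $\phi(j)<\varphi_x(j)$ to all $j\geq i_1+1$; this forces $Q_n(x)>0$ for all $n$ and hence $x\leq\xi_1$, a contradiction. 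Without this (or an equivalent) argument, your final step --- and therefore the claim of $\xi_1$-positive recurrence --- is unproven.
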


\bi In the next theorem, we assume that the process is $\xi_1$-positive recurrent and we exhibit a subset of the domain of attraction for the minimal \QSD. 

\begin{thm}
\label{thm:minimal-QSD-attraction}
Let $X$ be a $\xi_1$-positive recurrent birth and death process with infinitesimal generator $\cL$.
 Then the domain of attraction of the minimal \QSD{} of $X$ contains the set $\cD$ defined by
\begin{align*}
\cD=\left\{\mu\in\cM_1(\N^*),\, \sum_{i=1}^{\infty} \mu_i Q_i(\xi_1)<+\infty\right\}.
\end{align*}
Assume moreover that there exists $C>0$, $\lambda_1>\xi_1$ and $\phi:\N\rightarrow\R_+$  such that $\phi(i)$ goes to infinity when $i\rightarrow\infty$ and
\begin{align*}
\cL \phi(i)\leq -\lambda_1 \phi(i) + C,\ \forall i\geq 1.
\end{align*}
Then the domain of attraction of the minimal \QSD{} of $X$ contains the set $\cD_\phi$ defined by
\begin{align*}
\cD_{\phi}=\left\{\mu\in\cM_1(\N^*),\, \sum_{i=1}^{\infty} \mu_i \phi(i)<+\infty\right\}.
\end{align*}
\end{thm}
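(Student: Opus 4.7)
My plan is to treat both inclusions with the same mechanism: the Doob $h$-transform by the positive right eigenfunction $i\mapsto Q_i(\xi_1)$ of the absorbing generator. From the three-term recurrence, $\cL Q_\cdot(\xi_1)(i)=-\xi_1 Q_i(\xi_1)$ on $\N^*$ (with the convention $Q_0(\xi_1)=0$), so that $M_t:=e^{\xi_1(t\wedge T_0)}Q_{X_{t\wedge T_0}}(\xi_1)$ is a non-negative martingale. Reading off its expectation yields the pointwise control
\begin{align*}
 e^{\xi_1 t}\,\P_i(X_t=j)\;\leq\;\frac{Q_i(\xi_1)}{Q_j(\xi_1)},\qquad\forall i,j\geq 1,\ t\geq 0,
\end{align*}
which will serve as a dominator. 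Under $\xi_1$-positive recurrence the corresponding $Q$-process $\tilde X$ is an irreducible, positive recurrent Markov chain on $\N^*$ with stationary distribution $\tilde\pi(j)\propto\pi_j Q_j(\xi_1)^2$ (a probability measure, since $\xi_1$-positive recurrence is equivalent to $\sum_k\pi_k Q_k(\xi_1)^2<\infty$), and classical ergodic theory gives
\begin{align*}
 \lim_{t\to\infty} e^{\xi_1 t}\P_i(X_t=j) \;=\; \frac{Q_i(\xi_1)\,\pi_j Q_j(\xi_1)}{\sum_k \pi_k Q_k(\xi_1)^2},\qquad\forall i,j\geq 1.
\end{align*}

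For $\mu\in\cD$ I would split
\begin{align*}
\P_\mu(X_t=j\mid t<T_0)\;=\;\frac{\sum_i\mu_i\,e^{\xi_1 t}\P_i(X_t=j)}{\sum_i\mu_i\,e^{\xi_1 t}\P_i(t<T_0)}
\end{align*}
and pass to the limit in each sum. The numerator is handled by dominated convergence with $\mu$-summable dominator $\mu_i Q_i(\xi_1)/Q_j(\xi_1)$; for the denominator I would exploit the $Q$-process identity $e^{\xi_1 t}\P_i(t<T_0)=Q_i(\xi_1)\,\tilde\E_i[1/Q_{\tilde X_t}(\xi_1)]$ and apply the ergodic theorem for $\tilde X$ to the $\tilde\pi$-integrable function $1/Q_\cdot(\xi_1)$ (integrability coming from $\sum_j\pi_j Q_j(\xi_1)=d_1/\xi_1<\infty$, i.e.\ the normalisation of $\rho_{\xi_1}$), followed by dominated convergence in $i$. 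Taking the ratio of the two limits collapses to $\pi_j Q_j(\xi_1)/\sum_k \pi_k Q_k(\xi_1)=\rho_{\xi_1}(j)$, yielding the first assertion.

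For the second assertion I would reduce to the first by establishing the pointwise comparison $Q_i(\xi_1)\leq K(1+\phi(i))$ for some $K>0$, which immediately embeds $\cD_\phi$ into $\cD$. The Lyapunov inequality together with Dynkin's formula yields the semigroup bound $\E_i[\phi(X_t);t<T_0]\leq e^{-\lambda_1 t}\phi(i)+C/\lambda_1$, while the eigen-relation gives $\E_i[Q_{X_t}(\xi_1);t<T_0]=e^{-\xi_1 t}Q_i(\xi_1)$. Using the spectral gap $\lambda_1>\xi_1$, a discrete minimum principle applied to $g_K:=K(1+\phi)-Q_\cdot(\xi_1)$ shows that $g_K\geq 0$ for $K$ large enough: at any minimiser $i^*$ one has $\cL g_K(i^*)\leq 0$, whereas the Lyapunov and eigen identities yield $\cL g_K(i^*)\geq (\lambda_1-\xi_1) Q_{i^*}(\xi_1)-KC$, which is positive once $K$ is chosen large. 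The delicate point — and the main obstacle — is the localisation of this minimum: on the unbounded state space $\N$ the minimum principle requires an a priori control of $g_K$ at infinity, which I would extract either from the decay of $Q_j(\xi_1)$ enforced by $\sum_k\pi_k Q_k(\xi_1)^2<\infty$ combined with $\phi(i)\to\infty$, or, more robustly, by transferring the Lyapunov inequality directly to the $Q$-process, where it becomes $\tilde\cL(\phi/Q_\cdot(\xi_1))\leq-(\lambda_1-\xi_1)\,\phi/Q_\cdot(\xi_1)+C/Q_\cdot(\xi_1)$ and the convergence for $\mu\in\cD_\phi$ can be established directly within the dual ergodic framework, bypassing the pointwise comparison altogether.
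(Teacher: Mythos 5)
Your treatment of the first inclusion is essentially a repackaging of the paper's argument: where the paper invokes the orthogonal-polynomial representation of the semigroup with respect to the spectral measure $\psi$ together with the equivalence between $\xi_1$-positive recurrence and $\psi(\{\xi_1\})>0$, you invoke the Doob transform by $Q_\cdot(\xi_1)$ and the ergodic theorem for the resulting $Q$-process; both routes produce the same two pointwise limits, the same dominating function $\mu_i Q_i(\xi_1)$ (whose summability is the definition of $\cD$), and both need $\min_{i\geq 1}Q_i(\xi_1)=1$, which the paper isolates as Lemma~\ref{lem:usefull2}. The one point you should not leave implicit is that the transformed semigroup is honest, i.e.\ that $e^{\xi_1 t}\E_i\bigl(Q_{X_t}(\xi_1)\11_{t<T_0}\bigr)=Q_i(\xi_1)$ with equality: the supermartingale argument only gives ``$\leq$'', and conservativeness of the $Q$-process under $\xi_1$-positive recurrence is a fact to be quoted from the literature, not a consequence of the recurrence relation alone. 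Modulo that citation, this half is a legitimate variant of the paper's proof.

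The second inclusion is where the real content lies, and your sketch does not establish it. You correctly identify that everything reduces to the comparison $Q_i(\xi_1)\leq K(1+\phi(i))$ (this is exactly the paper's Lemma~\ref{lem:useful1}), but the minimum-principle argument you propose fails on three counts. First, the sign is backwards: at a global minimiser $i^*$ of $g_K$ one has $g_K(i^*\pm 1)\geq g_K(i^*)$ and hence $\cL g_K(i^*)\geq 0$, not $\leq 0$, so a positive lower bound on $\cL g_K(i^*)$ yields no contradiction. Second, the hypothesis $\cL\phi\leq -\lambda_1\phi+C$ is an \emph{upper} bound on $\cL\phi$, hence gives only $\cL g_K\leq K(-\lambda_1\phi+C)+\xi_1 Q_\cdot(\xi_1)$; the lower bound $\cL g_K(i^*)\geq(\lambda_1-\xi_1)Q_{i^*}(\xi_1)-KC$ you invoke would require a matching lower bound on $\cL\phi$ that is not assumed. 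Third, even after repairing the signs, making the upper bound negative at $i^*$ requires $\xi_1 Q_{i^*}(\xi_1)<K(\lambda_1\phi(i^*)-C)$, which is precisely the comparison being proved; and the localisation at infinity that you yourself flag as ``the main obstacle'' is not supplied by either of your escape routes ($\sum_k\pi_k Q_k(\xi_1)^2<\infty$ only gives $Q_k(\xi_1)=o(\pi_k^{-1/2})$, incommensurable with $\phi$, and the Lyapunov inequality transferred to the $Q$-process still leaves you needing $\mu(Q_\cdot(\xi_1))<\infty$ to run the dominated-convergence step). The paper closes exactly this gap by a global, non-local argument: assuming $\phi(i_1)\geq Q_{i_1}(\xi_1)$ but $\phi(i_1+1)<Q_{i_1+1}(\xi_1)$, it perturbs the eigenvalue to $x\in(\xi_1,\lambda_1]$ close to $\xi_1$, uses optional stopping at $T_j\wedge T_{i_1}$ to propagate $\phi(j)<\varphi_x(j)$ to all $j\geq i_1+1$, and deduces $Q_n(x)>0$ for every $n$ with $x>\xi_1$, contradicting the defining property~\eqref{eq:def_xi_1} of $\xi_1$. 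Some argument of this kind, playing the strict gap $\lambda_1>\xi_1$ against the extremal characterisation of $\xi_1$, is indispensable; as written, your proposal leaves the second assertion unproved.
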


\bi As it will be shown in the proof, we have $\cD_\phi\subset \cD$ for all function $\phi$ satisfying the assumptions of Theorem~\ref{thm:minimal-QSD-attraction}. However, $Q_\cdot(\xi_1)$ cannot be computed explicitly but in few situations. As a consequence, we won't be able to use the first criterion to determine whether a probability distribution $\mu$ belongs or not to the domain of attraction of the minimal \QSD. On the contrary, we will be able to give explicit functions $\phi$ satisfying the Lyapunov criterion of our theorem for a wide range of situations.

\bi Note that, since $d_1\geq \xi_1$, our results immediately imply that, if the process $X$ fulfils the assumptions of Theorem~\ref{thm:positivity_criterion} with a Lyapunov function $\phi$, then the process is $\xi_1$-positive recurrent and the domain of attraction of its minimal \QSD{} contains $\cD_\phi$. This consequence is used in the following examples.
\bi

\begin{exa}
We consider the case where $b_i=b\,i^a$ and $d_i=d\,i^a$ for all $i\geq 1$, where $b<d$ are two positive constants and $a>0$ is fixed.
Now, defining $\phi(0)=0$ and
\begin{align*}
\phi(i)=\sqrt{d/b}^{\,i},\ \forall i\in\N^*,
\end{align*}
one gets 
\begin{align*}
\cL\phi(i)&:=b_i(\phi(i+1)-\phi(i))+d_i(\phi(i-1)-\phi(i))\\
&= b\,i^a\left(\sqrt{d/b}^{\,i+1}-\sqrt{d/b}^{\,i}\right)+d\,i^a\left(\sqrt{d/b}^{\,i-1}-\sqrt{d/b}^{\,i}\right)\\
&= i^a \left[\left(\sqrt{db}\sqrt{d/b}^{\,i}-b\sqrt{d/b}^{\,i}\right)+\left(\sqrt{db}\sqrt{d/b}^{\,i}-d\sqrt{d/b}^{\,i}\right)\right]\\
&= -i^a\left(\sqrt{d}-\sqrt{b}\right)^2\sqrt{d/b}^i.
\end{align*}
 Since $i^a\rightarrow\infty$ when $i\rightarrow\infty$, we immediately deduce that there exists $C>0$ and $\lambda_1>d_1$ such that $\phi$ satisfies $\cL\phi\leq -\lambda_1 \phi+C$. Now
 Theorem~\ref{thm:positivity_criterion} implies that the process is $\xi_1$-positive recurrent and Theorem~\ref{thm:minimal-QSD-attraction} implies that the domain of attraction of the minimal \QSD{} contains
\begin{align*}
\cD_{\phi}=\left\{\mu\in\cM_1(\N^*),\, \sum_{i=1}^{\infty} \mu_i \sqrt{d/b}^i<+\infty\right\}.
\end{align*}
\end{exa}

\bi
\begin{exa}
We consider now the case where the birth and death rates are constant for all $i\geq 2$, that is $b_i=b>0$  and $d_i=d>0$ for all $i\geq 2$, where $b<d$ are positive constants. We assume that $d_1>0$ is such that $(\sqrt{d}-\sqrt{b})^2> d_1$ and the value of $b_1>0$ can be chosen arbitrarily.
 Using the same function as in the previous example, that is
\begin{align*}
\phi(i)=\sqrt{d/b}^{\,i},\ \forall i\geq 2,
\end{align*}
one gets 
\begin{align*}
\cL\phi(i)= -(\sqrt{d}-\sqrt{b})^2\sqrt{d/b}^i,\ \forall i\geq 2.
\end{align*}
In particular, there exists $C>0$ and $\lambda_1>d_1$ such that $\phi$ satisfies $\cL\phi\leq -\lambda_1 \phi+C$. Once again, we deduce from Theorem~\ref{thm:positivity_criterion} that the process is $\xi_1$-positive recurrent, which was already known in this case (see~\cite[eq.~(6.6)]{vanDoorn2003}). We also deduce the following new result from Theorem~\ref{thm:minimal-QSD-attraction}: the domain of attraction of the minimal \QSD{} contains the set
\begin{align*}
\cD_{\phi}=\left\{\mu\in\cM_1(\N^*),\, \sum_{i=1}^{\infty} \mu_i \sqrt{d/b}^i<+\infty\right\}.
\end{align*}
\end{exa}

\begin{exa}
In the two previous examples, the birth and death rates are non-decreasing and proportional to each other. This is coincidental and is only useful to get straightforward calculations. The aim of the present example is to illustrate this on a particular case without monotony nor proportionality between the birth and death rates: we choose $b_i=|\sin(i\pi/2)|i+1$ and $d_i=4i$ for all $i\geq 1$. Now, defining 
\begin{align*}
\phi(i)=2^i,\ \forall i\geq 1,
\end{align*}
we get, for all $i\geq 2$,
\begin{align*}
\cL \phi(i) = &:=b_i(\phi(i+1)-\phi(i))+d_i(\phi(i-1)-\phi(i))\\
&=(|\sin(i\pi/2)|i+1)\left(2^{i+1}-2^i\right)+4i(2^{i-1}-2^i)\\
&\leq 2^i\left(i+1-2i\right)=-(i-1)\phi(i).
\end{align*}
As above, we deduce that the process is $\xi_1$-positive recurrent and that the domain of attraction of the minimal \QSD{} contains the set of probability measures defined by $\left\{\mu\in\cM_1(\N^*),\, \sum_{i=1}^{\infty} \mu_i 2^i<+\infty\right\}$.
\end{exa}

\bi
The end of this section is dedicated to the proof of Theorems~\ref{thm:positivity_criterion} and~\ref{thm:minimal-QSD-attraction}.

\begin{lem}
\label{lem:useful1}
We assume that there exists $\lambda_1>\xi_1$, $C>0$ and $\phi:\N\rightarrow\R_+$  such that $\phi(i)$ goes to infinity when $i\rightarrow\infty$ and
\begin{align*}
\cL \phi(i)\leq -\lambda_1 \phi(i) + C,\ \forall i\geq 1.
\end{align*}
Then there exists a constant $i_0\geq 1$ and $\alpha>0$ such that $\phi(i)\geq \alpha Q_{i}(\xi_1)$, for all $i\geq i_0$.
\end{lem}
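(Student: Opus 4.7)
The plan is to compare $\phi$ and $Q_\cdot(\xi_1)$ via the exponential moment of the first-passage time $\tau:=\inf\{t\ge 0:X_t\le i_0-1\}$ for a well-chosen $i_0$. Fix an intermediate rate $\lambda'\in(\xi_1,\lambda_1)$. Since $\phi(i)\to\infty$, one can choose $i_0\ge 2$ large enough that both $\phi(i_0-1)>0$ and $\phi(i)\ge C/(\lambda_1-\lambda')$ for every $i\ge i_0$, so that the Lyapunov inequality simplifies to $\cL\phi(i)\le-\lambda'\phi(i)$ on $\{i_0,i_0+1,\ldots\}$.

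For $X_0=i\ge i_0$, the birth and death structure of $X$ forces $\tau\le T_0<\infty$ almost surely and $X_\tau=i_0-1$. The refined inequality $(\cL+\lambda')\phi\le 0$ on $\{i_0,i_0+1,\ldots\}$, combined with Dynkin's formula (localised by exit times from finite windows) and Fatou's lemma, shows that $e^{\lambda'(t\wedge\tau)}\phi(X_{t\wedge\tau})$ is a non-negative supermartingale; hence
\begin{equation*}
\phi(i_0-1)\,\E_i\!\left[e^{\lambda'\tau}\right]=\E_i\!\left[e^{\lambda'\tau}\phi(X_\tau)\right]\le\phi(i),\qquad i\ge i_0.
\end{equation*}

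For the matching bound on $Q$, I use that $Q_\cdot(\xi_1)$ is the minimal positive $\xi_1$-eigenfunction of $\cL$ with $Q_0(\xi_1)=0$: under the $\xi_1$-positive recurrence assumption (the framework in which this lemma is ultimately applied), Doob's $h$-transform of $X$ by $h=Q_\cdot(\xi_1)$ produces a recurrent, non-explosive Markov chain $X^Q$ on $\N^*$. Since $\P_i^Q(\tau<\infty)=1$ for every $i\ge i_0$, the standard change-of-measure formula for the $h$-transform yields
\begin{equation*}
1=\P_i^Q(\tau<\infty)=\frac{Q_{i_0-1}(\xi_1)}{Q_i(\xi_1)}\,\E_i\!\left[e^{\xi_1\tau}\right],
\end{equation*}
that is, $\E_i[e^{\xi_1\tau}]=Q_i(\xi_1)/Q_{i_0-1}(\xi_1)$. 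Because $\xi_1<\lambda'$, combining the two displays gives $Q_i(\xi_1)/Q_{i_0-1}(\xi_1)=\E_i[e^{\xi_1\tau}]\le\E_i[e^{\lambda'\tau}]\le\phi(i)/\phi(i_0-1)$, i.e.\ $\phi(i)\ge\alpha\,Q_i(\xi_1)$ with $\alpha:=\phi(i_0-1)/Q_{i_0-1}(\xi_1)>0$.

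The delicate step will be the $h$-transform identity: one must verify that the non-negative local martingale $e^{\xi_1 t}Q_{X_t}(\xi_1)$ is actually a true martingale up to $\tau$, equivalently that the $h$-process $X^Q$ does not explode. Both properties follow from $\xi_1$-positive recurrence---which furnishes a finite invariant probability measure proportional to $\pi_i Q_i(\xi_1)^2$ for $X^Q$---but they constitute the real probabilistic content of the proof; the Lyapunov half is a routine Dynkin-supermartingale argument.
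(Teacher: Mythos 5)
Your Lyapunov half is fine: the localized Dynkin/Fatou argument does yield $\phi(i_0-1)\,\E_i\left[e^{\lambda'\tau}\right]\le\phi(i)$ for $i\ge i_0$. The gap is in the other half. What you need is the lower bound $Q_i(\xi_1)\le Q_{i_0-1}(\xi_1)\,\E_i\left[e^{\xi_1\tau}\right]$; the supermartingale property of the non-negative local martingale $e^{\xi_1 t}Q_{X_t}(\xi_1)$ gives only the \emph{reverse} inequality, and upgrading it to an equality amounts to showing that the boundary term $Q_M(\xi_1)\,\E_i\left[e^{\xi_1 T_M}\11_{T_M<\tau}\right]$ vanishes as $M\to\infty$. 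This is not a general fact about birth and death processes: for the pure drift chain ($b_i=b$, $d_i=d$ for all $i\ge1$, $b<d$) one has $\xi_1=(\sqrt{d}-\sqrt{b})^2$, $Q_i(\xi_1)=i\,(\sqrt{d/b})^{\,i-1}$ and $\E_i\left[e^{\xi_1 T_{i-1}}\right]=\sqrt{d/b}$, so $\E_i\left[e^{\xi_1 T_{i-1}}\right]=\sqrt{d/b}<\tfrac{i}{i-1}\sqrt{d/b}=Q_i(\xi_1)/Q_{i-1}(\xi_1)$ and your identity fails strictly. You justify the identity by invoking $\xi_1$-positive recurrence of $X$, but that is not among the hypotheses of the lemma, and in the lemma's principal application --- the proof of Theorem~\ref{thm:positivity_criterion}, where the lemma is used precisely to deduce $\sum_i\rho_{\xi_1}(i)Q_i(\xi_1)<\infty$ and hence $\xi_1$-positive recurrence --- assuming it makes the argument circular. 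Nor can you close the loop with your own estimate: bounding the boundary term by $Q_M(\xi_1)\phi(i)/\phi(M)$ requires $Q_M(\xi_1)=O(\phi(M))$, which is essentially the conclusion of the lemma.

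For comparison, the paper never needs the hard (true-martingale) direction. After normalizing $\phi$ so that $\phi(i_0)\ge Q_{i_0}(\xi_1)$ and $\cL\phi\le-\lambda_1\phi$ on $\{i\ge i_0\}$, it argues by contradiction from a first index $i_1$ with $\phi(i_1+1)<Q_{i_1+1}(\xi_1)$, and compares $\phi$ not with $Q_\cdot(\xi_1)$ but with a multiple of $Q_\cdot(x)$ for some $x\in(\xi_1,\lambda_1]$ chosen by continuity of $x\mapsto Q_{i_1+1}(x)/Q_{i_1}(x)$. Two-sided exit identities on finite windows $\{i_1,\dots,j\}$ (which use only the one-sided supermartingale estimate for $\phi$) propagate the strict inequality $\phi(j)<\varphi_x(j)$ to all $j\ge i_1+1$, forcing $Q_n(x)>0$ for all $n$ and contradicting the defining property~\eqref{eq:def_xi_1} of $\xi_1$. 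If you want to keep your route, you must supply a proof of the martingale identity from the Lyapunov hypothesis alone; otherwise the argument should be restructured along the paper's lines.
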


\begin{proof}
 Since $\phi(i)\rightarrow\infty$ when $i\rightarrow\infty$, there exists $i_0\geq 1$ such that $\phi(i)\geq C/\lambda_1+1$ for any $i\geq i_0-1$. Let us set $$\phi'(i)=(\phi(i)-C/\lambda_1)_+\text{ for any }i\in\N^*.$$
 For all $i\geq i_0$, we have $\phi'(i)\geq 1$  and
\begin{align*}
\cL\phi'(i)&=\cL\phi(i)\leq -\lambda_1\phi(i)+C \nonumber\\
&\leq -\lambda_1(\phi'(i)+C/\lambda_1)+C\\
&=-\lambda_1 \phi'(i)
\end{align*}
Hence, since $Q_{i_0}(\xi_1)>0$ (see \cite[eq. (2.13)]{vanDoorn1991})  and replacing $\phi$ by $Q_{i_0}(\xi_1)\phi'$, we can assume without loss of generality that
\begin{align*}
\phi(i_0)\geq Q_{i_0}(\xi_1)\text{ and }\cL(\phi)\leq -\lambda_1\phi,\ \forall i\geq i_0.
\end{align*}
Our aim is now to prove that, for all $i\geq i_0$, $\phi(i)\geq Q_{i}(\xi_1)$. Because of the changes made with the function $\phi$, this implies the inequality claimed in the lemma with the constant $\alpha=1/Q_{i_0}(\xi_1)$.

Assume the contrary, which means that there exists $i_1\geq i_0$ such that $\phi(i_1)\geq Q_{i_1}(\xi_1)$ and $\phi(i_1+1)< Q_{i_1+1}(\xi_1)$.
Now, fix $x\in (\xi_1,\lambda_1]$ such that
\begin{align*}
Q_{i_1+1}(x)\frac{Q_{i_1}(\xi_1)}{Q_{i_1}(x)}>\phi(i_1+1).
\end{align*}
This is feasible, since $x\mapsto Q_{i}(x)$ is a polynomial function of $x$ and is then continuous in $x$ for any fixed $i$.
Then the function $\varphi_x:\N\rightarrow \R_+$ defined by
\begin{align*}
\varphi_x(i)=Q_{i}(x)\frac{Q_{i_1}(\xi_1)}{Q_{i_1}(x)}
\end{align*}
satisfies, $\cL \varphi_x(i)=-x\varphi_x(i)$ for all $i\in \N$, $\varphi_x(i_1)=Q_{i_1}(\xi_1)\leq \phi(i_1)$ and $\varphi_x(i_1+1)>\phi(i_1+1)$.

\me
Let us now prove that this inequality extends to any $j\geq i_1+1$. Indeed, using the equality $\cL \varphi_x=-x\varphi_x$ and the inequality $\cL \phi \leq -\lambda_1 \phi$, we have, for all $j> i_1+1$,
\begin{align*}
\varphi_x(i_1+1)&=\E_{i_1+1} \left(e^{ \,x T_j\wedge T_{i_1}}\varphi_x(X_{T_j\wedge T_{i_1}})\right)\\
&=\E_{i_1+1} \left(e^{ \,x T_j}\11_{T_j< T_{i_1}}\right)\varphi_x(j)+\E_{i_1+1} \left(e^{ \,x T_{i_1}}\11_{ T_{i_1}<T_j}\right)\varphi_x(i_1)
\end{align*}
and
\begin{align*}
\phi(i_1+1)&\geq \E_{i_1+1} \left(e^{ \,\lambda_1 T_j\wedge T_{i_1}}\phi(X_{T_j\wedge T_{i_1}})\right)\\
&= \E_{i_1+1} \left(e^{ \,\lambda_1 T_j}\11_{T_j< T_{i_1}}\right)\phi(j)+\E_{i_1+1} \left(e^{ \,\lambda_1 T_{i_1}}\11_{ T_{i_1}<T_j}\right)\phi(i_1)\\
&\geq \E_{i_1+1} \left(e^{ \,x T_j}\11_{T_j< T_{i_1}}\right)\phi(j)+\E_{i_1+1} \left(e^{ \,x T_{i_1}}\11_{ T_{i_1}<T_j}\right)\varphi_x(i_1).
\end{align*}
We deduce that
\begin{align*}
\E_{i_1+1} \left(e^{ \,x T_j}\11_{T_j< T_{i_1}}\right)\left(\phi(j)-\varphi_x(j)\right)\leq \phi(i_1+1)-\varphi_x(i_1+1)< 0
\end{align*}
and thus $\phi(j)<\varphi_x(j)$ for all $j\geq i_1+1$.

As a direct consequence, we deduce that $Q_n(x)>0$ for all $n\geq 0$. But $x>\xi_1$, which is in contradiction with~\cite[eq. (2.13)]{vanDoorn1991}.
\end{proof}

\begin{lem}
\label{lem:usefull2}
Let $\phi:\N\rightarrow\R_+$ be a function such that $\phi(0)=0$, $\phi(1)=1$ and $\cL\phi\leq 0$. Then 
\begin{align*}
\min_{i\geq 1} \phi(i)=1.
\end{align*}
\end{lem}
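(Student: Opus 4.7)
The plan is to show that $\phi(i)\geq 1$ for every $i\geq 1$; combined with the equality $\phi(1)=1$, this will give the claimed value of the minimum. The inequality is trivial for $i=1$, so I will fix $i_0\geq 2$ and derive it from the supermartingale property of $\phi(X_\cdot)$, localized so as to stay away from the absorbing state $0$ (whose value $\phi(0)=0$ would otherwise spoil any optional-stopping argument).

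For each integer $N>i_0$ I introduce $\tau_N=T_1\wedge T_N$, using the same notation $T_k=\inf\{t\geq 0,\,X_t=k\}$ as in the preceding lemma. Under $\P_{i_0}$, the trajectory $(X_t)_{t<\tau_N}$ stays in the finite set $\{2,\ldots,N-1\}$, and $\tau_N\leq T_0<\infty$ almost surely by~\eqref{eq:as_absorption}. Since $\phi$ is bounded on $\{1,\ldots,N\}$ and $\cL\phi\leq 0$ on $\{2,\ldots,N-1\}$, Dynkin's formula applied to the localized process will yield
\begin{align*}
\E_{i_0}\!\bigl[\phi(X_{t\wedge\tau_N})\bigr]=\phi(i_0)+\E_{i_0}\!\left[\int_0^{t\wedge\tau_N}\!\cL\phi(X_s)\,ds\right]\leq \phi(i_0),
\end{align*}
for every $t\geq 0$. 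Sending $t\to\infty$ (via bounded convergence, since the integrand is bounded by $\max_{1\leq j\leq N}\phi(j)$ and $\tau_N<\infty$ a.s.) will then give $\E_{i_0}\bigl[\phi(X_{\tau_N})\bigr]\leq \phi(i_0)$.

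Since $X_{\tau_N}\in\{1,N\}$ and $\phi\geq 0$, I will bound $\phi(X_{\tau_N})\geq \phi(1)\,\11_{T_1<T_N}=\11_{T_1<T_N}$, so that $\phi(i_0)\geq\P_{i_0}(T_1<T_N)$. It then remains to let $N\to\infty$: the events $\{T_1<T_N\}$ are increasing in $N$, and their union has full probability, since $T_1\leq T_0<\infty$ a.s. and $X$ can only visit finitely many states on the almost surely finite interval $[0,T_1]$. Hence $\phi(i_0)\geq 1$, which will finish the proof. The only point requiring any care is the optional-stopping step; this is why I plan to use the double localization $t\wedge T_1\wedge T_N$, which confines $X$ to a finite window where all integrability requirements are automatic, rather than trying to stop directly at $T_0$ (which would give the useless bound $\phi(i_0)\geq 0$).
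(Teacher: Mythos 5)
Your argument is correct and takes essentially the same route as the paper: there, one simply observes that $\phi(X)$ is a (non-negative) supermartingale, applies optional stopping at $T_1$, and uses that $T_1\leq T_0<\infty$ and $X_{T_1}=1$ almost surely to get $1=\phi(1)\leq\phi(i)$. Your double localization at $t\wedge T_1\wedge T_N$ is merely a more careful justification of that single optional-stopping step.
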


\begin{proof}
Under our assumption, $(\phi(X_t))_{t\geq 0}$ is a super martingale. As a consequence, for all $i\geq 1$,
\begin{align*}
\E_i\left(\phi(X_{T_1})\11_{T_1<\infty}\right)\leq \phi(i).
\end{align*}
But $T_1\leq T_0<\infty$ almost surely and $X_{T_1}=1$ almost surely, so that
\begin{align*}
1\leq \phi(i),\ \forall i\geq 1.
\end{align*}
\end{proof}

\bi\begin{proof}[Proof of Theorem~\ref{thm:positivity_criterion}]
The main difficulty is to prove that the minimal quasi-stationary distribution $\rho_{\xi_1}$ for $X$ exists and that 
\begin{align}
\label{eq:bound_on_rho_xi_1}
\rho_{\xi_1}(\phi)\leq \phi(1)\vee \frac{C}{\lambda_1-d_1}.
\end{align}
Once this is proved, Lemma~\ref{lem:useful1} implies that
\begin{align*}
\sum_{i\geq 1}\rho_{\xi_1}(i) Q_i(\xi_1)<\infty,
\end{align*}
which is a sufficient condition for $X$ to be $\xi_1$-positive recurrent (see~\cite[Theorem~5.2]{vanDoorn2003}).

Let us prove that~\eqref{eq:bound_on_rho_xi_1} holds.
For any $M\geq 1$, let us denote by $(P_t^M)_{t\geq 0}$ the semi-group of the process $X^M$ evolving in $\{0,1,\ldots,M\}$ and defined as
\begin{align*}
X^M_t=X_{t\wedge T_M},\ \forall t\geq 0,
\end{align*}
where $T_M=\inf\{t\geq 0,\ X_t=M\}$. We also define $\phi^M$ by $\phi^M(M)=0$ and $\phi^M(i)=\phi(i)$, $i\in\{0,1,\ldots,M-1\}$.
Now, denoting by $\cL^M$ the generator of the stopped process $X^M$ and setting $\varphi(i)=\11_{i\geq 1}$, we thus have
\begin{align*}
\cL^M \phi^M(i) \leq -\lambda_1 \phi^M(i) + C\varphi(i),\ \forall i\in\{0,1,\ldots,M\}
\end{align*}
and
\begin{align*}
\cL^M \varphi(i) \geq -d_1 \varphi(i),\ \forall i\in\{0,1,\ldots,M\}.
\end{align*}
Hence, using Kolmogorov equations for the finite state space continuous time Markov chain $X^M$, we deduce that
\begin{align*}
\frac{d}{dt}\left(\frac{\E_1\left(\phi^M(X^M_t)\right)}{\E_1\left(\varphi(X^M_t)\right)}\right)
&\leq -\lambda_1\frac{\E_1\left(\phi^M(X^M_t)\right)}{\E_1\left(\varphi(X^M_t)\right)}+C+d_1\frac{\E_1\left(\phi^M(X^M_t)\right)}{\E_1\left(\varphi(X^M_t)\right)}.
\end{align*}
This implies that, for any $t\geq 0$, we have
\begin{align*}
\frac{\E_1\left(\phi^M(X^M_t)\right)}{\E_1\left(\varphi(X^M_t)\right)}\leq \frac{\E_1\left(\phi^M(X^M_0)\right)}{\E_1\left(\varphi(X^M_0)\right)}\vee \frac{C}{\lambda_1-d_1}.
\end{align*}
But $X_0^M=1$ under $\P_1$, so that
\begin{align*}
\frac{\E_1\left(\phi^M(X^M_t)\right)}{\E_1\left(\varphi(X^M_t)\right)}\leq \phi^M(1)\vee \frac{C}{\lambda_1-d_1}=\phi(1)\vee \frac{C}{\lambda_1-d_1}.
\end{align*}
Now, by dominated convergence, we have
\begin{align*}
\E_1\left(\varphi(X^M_t)\right)\xrightarrow[M\rightarrow\infty]{} \E_1\left(\varphi(X_t)\right)=\P(X_t\neq 0).
\end{align*}
By monotone convergence, we also deduce that
\begin{align*}
\E_1\left(\phi^M(X^M_t)\right)\xrightarrow[M\rightarrow\infty]{} \E_1\left(\phi(X_t)\right).
\end{align*}
We finally deduce that, for all $t\geq 0$, 
\begin{align*}
\E_1\left(\phi(X_t)\mid X_t\neq 0\right)\leq \phi(1)\vee \frac{C}{\lambda_1-d_1}.
\end{align*}

The first consequence of this inequality is that the process $X_t$ conditioned on the event $X_t\neq 0$ does not diverge to infinity. As a consequence, $\xi_1>0$ and there exists a minimal quasi-stationary distribution $\rho_{\xi_1}$ for $X$ (see \cite[Theorem~4.1]{vanDoorn1991}). In particular, $X_t$ conditioned on $X_t\neq 0$ converges in law to $\rho_{\xi_1}$. Hence, we deduce from the above inequality that, for any $K\geq 0$,
\begin{align*}
\rho_{\xi_1}(\phi\wedge K)\leq \phi(1)\vee \frac{C}{\lambda_1-d_1}.
\end{align*}
By monotone convergence, we obtain by letting $K$ tend to $\infty$ that
\begin{align*}
\rho_{\xi_1}(\phi)\leq \phi(1)\vee \frac{C}{\lambda_1-d_1}.
\end{align*}
\end{proof}

\bi
\begin{proof}[Proof of Theorem~\ref{thm:minimal-QSD-attraction}]
Let $X$ be a $\xi_1$-positive recurrent birth and death process with minimal quasi-stationary distribution $\rho_{\xi_1}$.
We prove that the domain of attraction of $\rho_{\xi_1}$ contains the set of probability measures
\begin{align*}
\cD=\left\{\mu\in\cM_1(\N^*),\, \mu(Q_{\cdot}(\xi_1)=\sum_{i=1}^{\infty} \mu_i Q_i(\xi_1)<+\infty\right\}.
\end{align*}
Once this is proved, the second assertion of Theorem~\ref{thm:minimal-QSD-attraction} follows immediately from Lemma~\ref{lem:useful1}.

\me
Let $\mu$ be a probability measure on $\N^*$ such that $\mu(Q_\cdot(\xi_1))<\infty$.
It is well known (see for instance~\cite[eq. (5.15) and Proposition~5.1]{ColletMartinezsanMartin2013}) that there exists a positive measure $\psi$ whose support is in $[\xi_1,\infty)$ and such that, for any $i,j\geq 1$,
\begin{align}
\label{eq:semi-group_polynomials}
\P_i(X_t=j)=\pi_j\int_{0}^\infty e^{xt} Q_i(x)\,Q_j(x)\,d\psi(x).
\end{align}
But the $\xi_1$-positive recurrence implies that $\psi(\{\xi_1\})>0$ (this two properties are in fact equivalent, see~\cite[Theorem~5.1]{vanDoorn2003}). As a consequence,
\begin{align*}
e^{\xi_1 t}\P_i(X_t=j)\xrightarrow[t\rightarrow\infty]{} Q_i(\xi_1)\psi(\{\xi_1\})\pi_j Q_j(\xi_1)=Q_i(\xi_1)\psi(\{\xi_1\})\frac{d_1\,\rho_{\xi_1}(j)}{\xi_1}.
\end{align*}
Let us define the function $\varphi_1:\N\rightarrow\R_+$ by $\varphi_1(i)=Q_i(\xi_1)$ for all $i\geq 1$ and $\varphi_1(0)=0$. Then, using the equality $\cL \varphi_1=-\xi_1\varphi_1$, we deduce that 
\begin{align*}
e^{\xi_1 t}\P_{\mu}(X_t=j)&\leq e^{\xi_1 t}\P_{\mu}(t< T_0)\\
&\leq e^{\xi_1 t}\E_{\mu}\left(\frac{\varphi_1(X_t) }{\min_{i\geq 1}\varphi_1(i)}\right)\\
&\leq \mu(\varphi_1)<\infty,
\end{align*}
indeed, the minimum of $\varphi_1$ on $\N^*$ is $1$ by Lemma~\ref{lem:usefull2} and $\mu(\varphi_1)=\mu(Q_\cdot(\xi_1))<\infty$ by assumption.

Hence, by dominated convergence theorem, we deduce that
\begin{align*}
 e^{\xi_1 t}\P_{\mu}(X_t=j)\xrightarrow[t\rightarrow\infty]{}  \mu(Q_\cdot(\xi_1))\psi^d(\{\xi_1\})\frac{d_1\,\rho_{\xi_1}(j)}{\xi_1},
\end{align*}
where $\mu(Q_\cdot(\xi_1))=\mu(\phi_1)<\infty$ by assumption.
It is also known that (see~\cite[eq.~(5.21)]{ColletMartinezsanMartin2013})
\begin{align*}
\P_i(t< T_0)=d_1\int_0^\infty \frac{e^{-tx}}{x}Q_i(x)d\psi(x).
\end{align*}
Using the same approach as above, we obtain that
\begin{align*}
e^{\xi_1 t}\P_{\mu}(t<T_0)\xrightarrow[t\rightarrow\infty]{}  \mu(Q_\cdot(\xi_1))\psi^d(\{\xi_1\})\frac{d_1}{\xi_1}.
\end{align*}
We finally deduce the convergence
\begin{align*}
\P_\mu(X_t=j\mid t< T_0)=\frac{e^{\xi_1 t}\P_\mu(X_t=j)}{e^{\xi_1 t}\P_\mu(t< T_0)}\xrightarrow[t\rightarrow\infty]{}
\rho_{\xi_1}(j),
\end{align*}
which means that $\mu$ is in the domain of attraction of the minimal \QSD.
\end{proof}

\section{Approximation of the minimal quasi-stationary distribution}
\label{sec:FV}

This section is devoted to the study of the ergodicity and the convergence of a Fleming-Viot type particle system.

\me Fix $N\geq 2$ and let us describe precisely the dynamics of this system with $N$ particles, which we denote by $(X^1,X^2,\ldots,X^N)$. The process starts at a position $(X^1_0,X^2_0,\ldots,X^N_0)\in\left(\N^*\right)^{N}$ and evolves as follows:
\begin{itemize}
\item[-] the particles $X^i$, $i=1,\ldots,N$, evolve as independent copies of the birth and death process $X$ until one of them hits $0$; this hitting time is denoted by $\tau_1$;
\item[-] then the (unique) particle hitting $0$ at time $\tau_1$ jumps instantaneously on the position of a particle chosen uniformly among the $N-1$ remaining ones; this operation is called a \textit{rebirth};
\item[-]  because of this rebirth, the $N$ particles lie in $\N^*$ at time $\tau_1$; then the $N$ particles evolve as independent copies of $X$ and so on.
\end{itemize}
We denote by $\tau_1<\tau_2<\cdots<\tau_n<\cdots$ the sequence of rebirths times. Since the rate at which rebirths occur is uniformly bounded above by $N\,d_1$,
\begin{align*}
\lim_{n\rightarrow\infty} \tau_n=+\infty\text{ almost surely.}
\end{align*}
As a consequence, the particle system $(X^1_t,X^2_t,\ldots,X^N_t)_{t\geq 0}$ is well defined for any time $t\geq 0$ in an incremental way, rebirth after rebirth (see Figure~\ref{fig:FV2part} for an illustration of this construction with $N=2$ particles).

\me This Fleming-Viot type system has been introduced by Burdzy, Holyst, Ingermann and March
 in \cite{Burdzy1996} and studied in \cite{Burdzy2000}, \cite{Grigorescu2004}, \cite{Villemonais2010}, \cite{Grigorescu2011}
 for multi-dimensional diffusion processes. The study of this system when the underlying Markov process $X$ is a continuous time Markov chain in a countable state space has been initiated in \cite{Ferrari2007} and followed by \cite{Asselah2011}, \cite{Asselah2012}, \cite{Groisman2012}, \cite{AsselahThai2012} and \cite{CloezThai2013}. We also refer the reader to \cite{Groisman2013}, where general considerations on the link between the study of such systems and front propagation problems are considered.
\begin{figure}
\includegraphics[height=7cm]{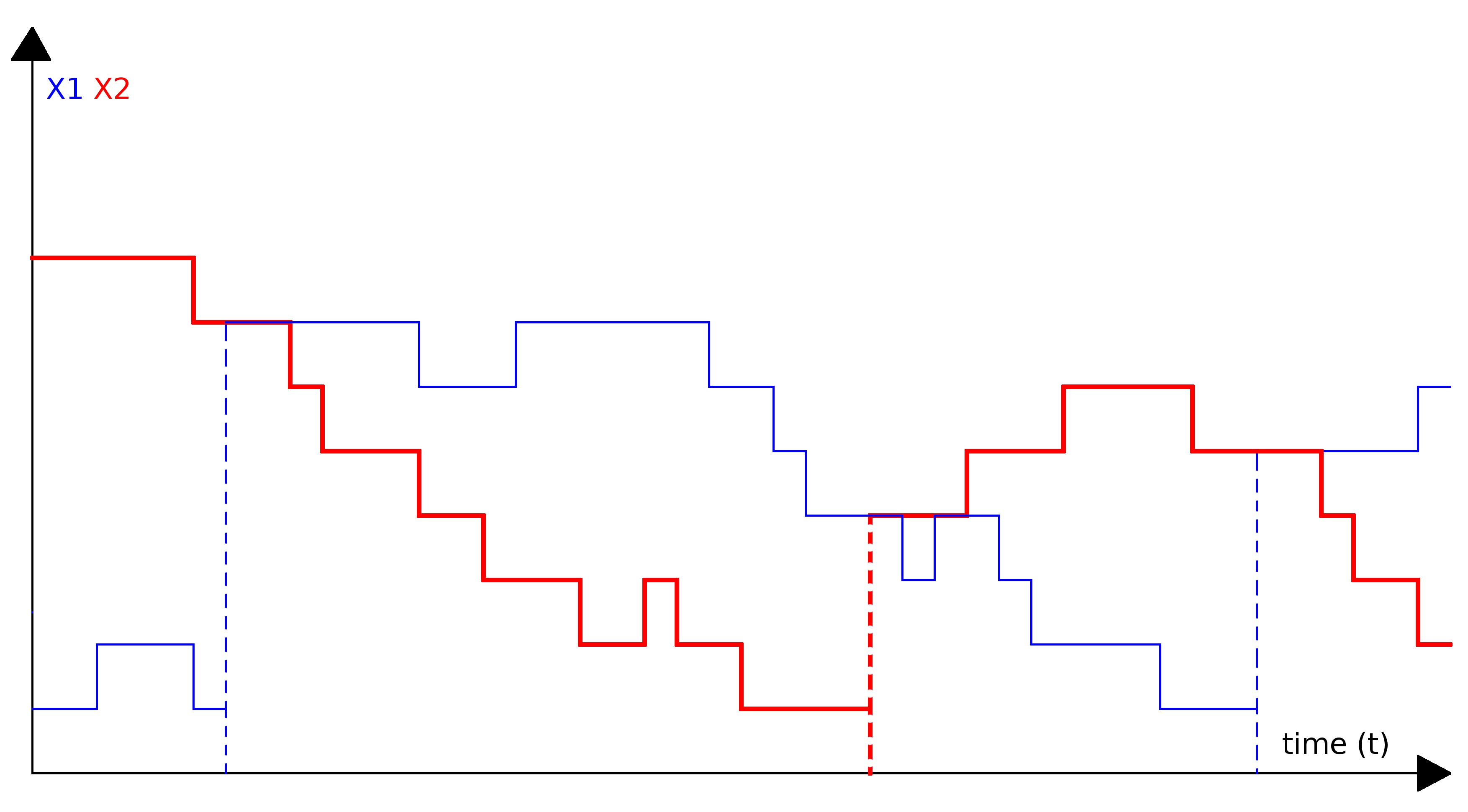}
\caption{One path of a Fleming-Viot system with two particles.}
\label{fig:FV2part}
\end{figure}

\me We emphasize that, because of the rebirth mechanism, the particle system $(X^1,X^2,\ldots,X^N)$ evolves in $\left(\N^*\right)^N$. For any $t\geq 0$, we denote by $\mu^N_t$ the empirical distribution of $(X^1,X^2,\ldots,X^N)$ at time $t$, defined by
\begin{align*}
\mu^N_t=\frac{1}{N}\sum_{i=1}^N\delta_{X^i_t}\in{\cal M}_1(\N^*),
\end{align*}
where ${\cal M}_1(\N^*)$ is the set of probability measures on $\N^*$. A general convergence result obtained in \cite{Villemonais2013} ensures that, if $\mu_0^N\rightarrow\mu_0$, then
\begin{align*}
\mu^N_t\xrightarrow[N\rightarrow\infty]{} \P_{\mu_0}(X_t\in\cdot\mid t<T_0).
\end{align*}

\me The generality of this result does not extend to the long time behaviour of the particle system, which is the subject of the present study. We provide a sufficient criterion ensuring that the process $(\mu^N_t)_{t\geq 0}$ is ergodic. Denoting by $\cX^N$ its empirical stationary distribution (a random measure whose law is the stationary distribution of $\mu^N$), our criterion also implies that 
\begin{align}
\label{eq:intro-main-result}
\cX^N \xrightarrow[N\rightarrow\infty]{Law} \rho,
\end{align}
where $\rho$ is the minimal quasi-stationary distribution of the birth and death process $X$.
Our result applies (1) to birth and death processes with a unique quasi-stationary distribution (such as logistic birth and death processes) and (2) to birth and death processes with a minimal \QSD{} satisfying an explicit Lyapunov condition (fulfilled for instance by linear birth and death processes). These two different conditions are summarized in Assumptions~H1 and~H2 below.

\begin{description}
\item[Assumption H1.] There exist a function $\phi:\N\rightarrow\R_+$ and two constants $\lambda_1>d_1$ and $C\geq 0$ such that $\phi(0)=0$, $\phi(i)>0$ for all $i\geq 1$ and
\begin{align*}
\phi(x)\xrightarrow[x\rightarrow\infty]{} \infty\text{ and }\cL \phi(i)\leq -\lambda_1 \phi(i) + C,\ \forall i\geq 1.
\end{align*}
\item[Assumption H2.] The birth and death process $X$ admits a unique quasi-stationary distribution ($S<+\infty$).
\end{description}

\begin{thm}
\label{thm:main-result}
Assume that Assumption~H1 or Assumption~H2 is satisfied. Then, for any $N> \frac{\lambda_1}{\lambda_1-d_1}$ under H1 and any $N\geq 2$ under H2, the measure process $(\mu^N_t)_{t\geq 0}$ is ergodic, which means that there exists a random measure $\cX^N$ on $\N^*$ such that
\begin{align*}
\mu^N_t\xrightarrow[t\rightarrow\infty]{Law}\cX^N.
\end{align*}
If H1 holds, then $$\E(\phi(\cX^N))\leq C/(\lambda_1-d_1 N/(N-1)).$$
Moreover, if Assumption H1 or H2 is satisfied, then
\begin{align*}
\cX^N\xrightarrow[N\rightarrow\infty]{Law}\rho,
\end{align*}
where $\rho$ is the minimal \QSD{} of $X$.
\end{thm}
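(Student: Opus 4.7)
The statement contains three assertions: (a) ergodicity of the measure-valued chain $(\mu^N_t)_{t\ge 0}$, (b) the stationary moment bound under H1, and (c) the convergence $\cX^N\to\rho$ in law. My plan is to handle (a) and (b) simultaneously through a Foster--Lyapunov computation on the particle system, and to treat (c) by characterizing any subsequential weak limit $\cX^\infty$ as a quasi-stationary distribution and then identifying it with $\rho$.

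\textbf{Step 1: ergodicity and the moment bound.} The $N$-particle system is an irreducible continuous-time Markov chain on the countable state space $(\N^*)^N$, so ergodicity reduces to positive recurrence. Under H1 I would apply the generator $\cL^N$ of the particle system to the symmetric Lyapunov function $\Phi(x_1,\dots,x_N)=\frac{1}{N}\sum_{i=1}^N\phi(x_i)$. The independent particle motions produce $\frac{1}{N}\sum_i\cL\phi(x_i)$, while the rebirth mechanism replaces each would-be jump $\phi(x_i)\to\phi(0)=0$ at rate $d_1\11_{x_i=1}$ by an average of the values $\phi(x_j)$ at the other particles; a direct computation gathers both contributions into
\[
\cL^N\Phi(x)\le -\Bigl(\lambda_1-\tfrac{d_1 N}{N-1}\Bigr)\Phi(x)+C.
\]
When $N>\lambda_1/(\lambda_1-d_1)$ the coefficient of $\Phi$ is strictly negative, so Foster's criterion delivers positive recurrence, and integrating the drift inequality against the invariant law yields the stated bound $\E[\phi(\cX^N)]\le C/(\lambda_1-d_1 N/(N-1))$. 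Under H2 the Lyapunov route above is not directly available, but Theorem~\ref{thm:martinez-san-martin-villemonais} provides uniform exponential contraction of the one-particle conditioned semigroup; this strong mixing can be transferred to the particle system through a synchronous coupling argument (two copies of the FV system merge in finite expected time irrespective of initial configurations) to yield positive recurrence.

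\textbf{Step 2: convergence of $\cX^N$.} Tightness of $(\cX^N)_N$ as random elements of $\cM_1(\N^*)$ follows from the moment bound of Step~1 under H1 (since $\phi(i)\to\infty$), and from an analogous bound on $\E[\cX^N(\E_\cdot(T_0))]$ under H2. Let $\cX^\infty$ be any subsequential weak limit. Stationarity gives $\E[\cL^N G(\cX^N)]=0$ for admissible functionals $G$; for $G(\mu)=\mu(f)$ with $f:\N^*\to\R$ of finite support, a direct computation of the generator produces a sum of $\mu(\cL f)$, the quadratic term $\tfrac{d_1 N}{N-1}\mu(\{1\})\mu(f)$, and explicit corrections of order $1/N$. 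Passing to the limit $N\to\infty$ shows that a.s.\ $\cX^\infty$ satisfies $\cX^\infty(\cL f)+d_1\cX^\infty(\{1\})\cX^\infty(f)=0$ for all such $f$, which is precisely the weak form of the QSD equation; hence $\cX^\infty$ is almost surely a quasi-stationary distribution. Under H2 the QSD is unique, so $\cX^\infty=\rho$. Under H1, Fatou's lemma gives $\E[\cX^\infty(\phi)]<\infty$ so that $\cX^\infty\in\cD_\phi\subset\cD$ almost surely; combining the QSD identity $\P_{\cX^\infty}(X_t\in\cdot\mid t<T_0)=\cX^\infty$ with the convergence $\P_{\cX^\infty}(X_t\in\cdot\mid t<T_0)\to\rho$ provided by Theorem~\ref{thm:minimal-QSD-attraction} forces $\cX^\infty=\rho$ a.s.

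\textbf{Main obstacle.} The critical technical step is the passage to the limit in the quadratic functional $\mu(\{1\})\mu(f)$ appearing in $\cL^N G$: weak convergence of $\cX^N$ alone does not control this product, and one needs a propagation-of-chaos or variance estimate showing that the two-particle marginals of $\cX^N$ decorrelate as $N\to\infty$, so that $\E[\cX^N(\{1\})\cX^N(f)]$ factorizes in the limit. The Step~1 moment bound is the principal tool for upgrading weak convergence of $\cX^N$ into convergence of such quadratic observables, and the admissible class of test functions $f$ must be chosen accordingly.
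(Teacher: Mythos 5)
Your Step 1 under H1 coincides with the paper's proof: the same Foster--Lyapunov computation for $f(\mu)=\mu(\phi)$, the same drift inequality with rate $\lambda_1-d_1N/(N-1)$, and the same integration against the invariant law to get the moment bound. Under H2 the paper does not run a coupling; it observes that $S<\infty$ gives $\inf_{x\in\N^*}\P_x(X_1=1)>0$, hence a uniform Doeblin minorization for the $N$-particle chain (all particles at $1$ at time $1$ with probability bounded below uniformly in the initial configuration), which yields exponential ergodicity at once. Your ``synchronous coupling'' sketch would have to rest on the same fact, so that difference is cosmetic.

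The genuine gap is in Step 2. A preliminary remark: since $\N^*$ is discrete, $\mu\mapsto\mu(\{1\})\mu(f)$ is a bounded continuous functional of $\mu\in\cM_1(\N^*)$ for finitely supported $f$, so the convergence of $\E[\cX^N(\{1\})\cX^N(f)]$ that you single out as the main obstacle is automatic from $\cX^N\Rightarrow\cX^\infty$, and the Step 1 moment bound plays no role there. The real problem is the deduction ``hence $\cX^\infty$ is almost surely a quasi-stationary distribution''. Stationarity gives $\E[\cL^NG(\cX^N)]=0$, and in the limit you obtain only $\E\bigl[\cX^\infty(\cL f)+d_1\cX^\infty(\{1\})\cX^\infty(f)\bigr]=0$, an identity \emph{in expectation} for the random measure $\cX^\infty$; the integrand has no sign, so this does not force it to vanish almost surely, and because of the quadratic term it does not even close into an equation for the mean measure $\E[\cX^\infty]$. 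Upgrading it to an almost sure statement requires showing that the limiting dynamics of $\mu^N$ are deterministic, i.e.\ controlling the carr\'e du champ of $\mu\mapsto\mu(f)$ under $\cL^N$ --- which is exactly the propagation-of-chaos estimate you defer, so the identification is not actually carried out. The paper avoids all of this by invoking the finite-time convergence theorem of \cite{Villemonais2013} as a black box: starting the system from $\cX^N$ and using stationarity yields $\P_{\cX}(X_t\in\cdot\mid t<T_0)\stackrel{Law}{=}\cX$ for every $t$, and letting $t\to\infty$ via Theorem~\ref{thm:minimal-QSD-attraction} (applicable since $\cX(\phi)<\infty$ a.s.) identifies $\cX$ with the deterministic measure $\rho$. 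Finally, under H2 your tightness claim fails: Assumption H2 is precisely $S=\sup_x\E_x(T_0)<\infty$, so $\E[\cX^N(\E_\cdot(T_0))]\le S$ carries no tightness information; the paper instead proves $\E|\cX^N(f)-\rho(f)|\le\epsilon\|f\|_\infty$ directly, by combining the uniform exponential convergence of Theorem~\ref{thm:martinez-san-martin-villemonais} with the quantitative $O(e^{d_1t}/\sqrt{N})$ bound of \cite{Villemonais2013}, with no subsequence extraction at all.
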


\bi

\begin{rem} $\ $
\begin{enumerate}
\item Assumption~H1 is the Lyapunov criterion which is used in Theorem~\ref{thm:positivity_criterion} to ensure $\xi_1$-positivity (and hence the existence of a minimal \QSD). This assumption also implies that the conditions of Theorem~\ref{thm:minimal-QSD-attraction}, where we determine a subset of the domain of attraction of the \QSD, are also satisfied. For instance, the birth and death processes of Examples~1,~2 and~3 in the previous section satisfy Assumption~H1.
\item Assumption~H2 is satisfied for processes that come fast from infinity to compact sets, as the logistic birth and death process (where $b_i=b\,i$ and $d_i=d\,i+c\,i(i-1)$ for all $i\geq 1$ with $b,c,d>0$). Note that, in this particular example, an easy calculation shows that Assumption~H1 is also satisfied with $\phi(i)=2^i$. However, this assumption is useful for any situation where it is easy to check that $S<\infty$, but difficult to find an explicit Lyapunov function satisfying Assumption~H1.
\end{enumerate}
\end{rem}

\begin{rem}
The pure drift birth and death process ($b_i=b$ and $d_i=d$ for all $i\geq 1$, where $b<d$ are two positive constants) does not satisfy Assumption~H1 nor Assumption~H2. Note that this process is the same as in Example~2 but does not satisfy $(\sqrt{d}-\sqrt{b})^2>d_1$. In particular, we cannot apply Theorem~\ref{thm:positivity_criterion} on $\xi_1$-positivity and, in fact, it is known that the pure drift birth and death process is not $\xi_1$-positive recurrent (see~\cite{vanDoorn2003}). As a consequence, the additional difficulty is not a technical one and the following proof cannot work in the pure drift situation. We emphasize that Theorem~\ref{thm:main-result} for pure drift birth and death processes remains an open problem. See for instance~\cite{AsselahThai2012} and the numerical investigation in~\cite{Maric2014} for more details.
\end{rem}

\me
Since the proof of Theorem~\ref{thm:main-result} differs whether one assumes H1 or H2, it is split in two different subsections : in Subsection~\ref{sec:proofH1}, we prove the theorem under Assumption~H1 and, in Subsection~\ref{sec:proofH2}, we prove the result under assumption~H2.

\subsection{Proof under Assumption H1: exponential ergodicity via a Foster--Lyapunov criterion}
\label{sec:proofH1}

\bi \textit{Step 1. Proof of the exponential ergodicity by a Forster--Lyapunov criterion}\\
We define the function 
\begin{align*}
f:\ &\cM_1(\N^*)\rightarrow \R\\
&\mu\mapsto \mu(\phi),
\end{align*}
where $\phi$ is the Lyapunov function of Assumption~H1.
Fix $N\geq 2$ and let us express the infinitesimal generator $\cL^N$ of the empirical process $(\mu^N_t)_{t\geq 0}$ applied to $f$ at a point $\mu\in \cM_1(\N^*)$ given by
\begin{align*}
\mu=\frac{1}{N}\sum_{i=1}^N \delta_{x_i},
\end{align*}
where $(x_1,\ldots,x_N)\in(\N^*)^N$. 
In order to shorten the notations, we introduce, for any $y\in \N^*$, the probability measure
\begin{align*}
\mu^{x_j,y}=\mu+\frac{1}{N}\left(\delta_y-\delta_{x_j}\right).
\end{align*}
We thus have
\begin{align*}
\cL^Nf(\mu)=&\sum_{i=1}^N b_{x_i} \left(f(\mu^{x_i,x_{i+1}})-f(\mu)\right)+\11_{x_i\neq 1} d_{x_i} \left(f(\mu^{x_i,x_{i-1}})-f(\mu)\right)\\
&+\sum_{i=1,x_i=1}^N   d_1\frac{1}{N-1}\sum_{j=1,\,j\neq i}^N \left(f(\mu^{1,x_j})-f(\mu)\right)\\
=&\sum_{i=1}^N b_{x_i} \left(\phi(x_{i+1})-\phi(x_i)\right)/N+\11_{x_i\neq 1}d_{x_i} \left(\phi(x_{i-1})-\phi(x_i)\right)/N\\
&+\sum_{i=1,x_i=1}^N   d_1\frac{1}{N-1}\sum_{j=1,\,j\neq i}^N \left(\phi(x_j)-\phi(1)\right)/N
\end{align*}
Since $\phi(0)=0$, one gets
\begin{align*}
\cL^N f(\mu)=&\sum_{i=1}^N b_{x_i} \left(\phi(x_{i+1})-\phi(x_i)\right)/N+d_{x_i} \left(\phi(x_{i-1})-\phi(x_i)\right)/N\\
&+\sum_{i=1,x_i=1}^N   d_1\frac{1}{N-1}\sum_{j=1,\,j\neq i}^N \phi(x_j)/N\\
=&\frac{1}{N}\sum_{i=1}^N \cL\phi(x_i)+\sum_{i=1,x_i=1}^N   d_1\frac{1}{N-1}\left(\sum_{j=1}^N \phi(x_j)/N-\phi(1)/N\right)\\
\leq &\frac{1}{N}\sum_{i=1}^N \cL\phi(x_i)+\left(\frac{1}{N}\sum_{i=1,x_i=1}^N   d_1\right)\left(\frac{1}{N-1}\sum_{j=1}^N \phi(x_j)\right)\\
\leq &\mu(\cL\phi)+\frac{N}{N-1}d_1\mu(\phi).
\end{align*}
Now, using Assumption~H1, we deduce that
\begin{align*}
\cL^Nf(\mu)&\leq \mu(-\lambda_1\phi+C)+\frac{N}{N-1}d_1\mu(\phi)\\
		   &\leq -\lambda_1 \mu(\phi)+C+\frac{N}{N-1}d_1\mu(\phi)\\
           &\leq -\left(\lambda_1-\frac{N}{N-1}d_1\right)f(\mu)+C,
\end{align*}
where $\lambda_1-\frac{N}{N-1}d_1$ is a positive constant for any fixed $N >\frac{\lambda_1}{\lambda_1-d_1} $.

\noindent For a fixed $N >\frac{\lambda_1}{\lambda_1-d_1}$ and any constant $k>0$, the set of probability measures $\mu=\frac{1}{N}\sum_{i=1}^N\delta_{x_i}$ such that $f(\mu)=\mu(\phi)\leq k$ is finite because $\phi(i)\rightarrow\infty$ when $i\rightarrow\infty$. Moreover the Markov process $(\mu^N_t)$ is irreducible (this is an easy consequence of the irreducibility of the birth and death process $X$). Thus, using the Foster Lyapunov criterion of \cite[Theorem 6.1, p.536]{MeynTweedie1993III} (see also~\cite[Proposition~1.4]{Hairer2010convergence} for a simplified account on the subject), we deduce that the process $\mu^N$ is exponentially ergodic
 and, denoting by $\cX^N$ a random measure distributed following its stationary distribution, we also have
\begin{align}
\label{eq:tightness}
\E(\cX^N(\phi))=\E(f(\cX^N))\leq C/\left(\lambda_1-\frac{N}{N-1}d_1\right).
\end{align}
This concludes the proof of the first part of Theorem~\ref{thm:main-result}.

\bi \textit{Step 2. Convergence to the minimal QSD}\\
Since $\phi(i)$ goes to infinity when $i\rightarrow\infty$, we deduce from~\eqref{eq:tightness} that the family of random measures $(\cX^N)_N$ is tight. 
In particular, the family admits at least one limiting random probability measure $\cX$, which means that $\cX^N$ converges in law to $\cX$, up to a subsequence.

\me Let $\mu^N_t$ be the random position at time $t$ of the particle system with initial (random) distribution $\cX^N$. On the one hand, the 
stationarity of $\cX^N$ implies that $\mu^N_t\sim \cX^N$ for all $t\geq 0$, and thus
\begin{align*}
\mu^N_t\xrightarrow[N\rightarrow\infty]{Law}\cX,\ \forall t\geq 0.
\end{align*}
On the other hand, the general convergence result of~\cite{Villemonais2013} implies that
\begin{align*}
\mu^N_t\xrightarrow[N\rightarrow\infty]{Law} \P_{\cX}(X_t\in\cdot\mid t<\tau_\d).
\end{align*}
As an immediate consequence
\begin{align*}
\P_{\cX}(X_t\in\cdot\mid t<\tau_\d)\stackrel{Law}{=} \cX.
\end{align*}
But~\eqref{eq:tightness} also implies that $\E(f(\cX))<\infty$, so that $\cX(\phi)=f(\cX)<\infty$ almost surely. Using Theorem~\ref{thm:minimal-QSD-attraction}, we deduce that $\cX$ belongs to the domaine of attraction of the minimal QSD $\rho$ almost surely, that is
\begin{align*}
\P_{\cX}(X_t\in\cdot\mid t<\tau_\d)\xrightarrow[t\rightarrow\infty]{almost\, surely} \rho.
\end{align*}
Thus the random measure $\cX$ converges in law to the deterministic measure $\rho$, which implies that
\begin{align*}
\cX=\rho\text{ almost surely}.
\end{align*}
In particular, $\rho$ is the unique limiting probability measure of the family $(\cX^N)_N$, which ends the proof of Theorem~\ref{thm:main-result} under Assumption~H1.

\subsection{Proof under Assumption H2: exponential ergodicity by a Dobrushin coefficient argument}
\label{sec:proofH2}

Fix $N\geq 2$ and let us prove that the process is exponentially ergodic. Under assumption (H2), it is well known (see for instance~\cite{Martinez-Martin-Villemonais2013}) that the process $X$ comes back in finite time from infinity to $1$, which means that
\begin{align*}
\inf_{x\in \N^*} \P_x(X_1=1)>0.
\end{align*}
Since the particles of a Fleming-Viot type system are independent up to the first rebirth time, we deduce that
\begin{align*}
\inf_{(x_1,\ldots,x_N)\in(\N^*)^N} \P((X^1_1,\ldots,X^N_1)=(1,\ldots,1))>0.
\end{align*}
This implies that the FV process is exponentially ergodic.

\me Let us now denotes by $\cX^N$ the empirical stationary distribution of the system $(X^1,\ldots,X^N)$, for each $N\geq 2$. Theorem~\ref{thm:martinez-san-martin-villemonais} implies that there exists $\gamma>0$ such that, for any $t\geq t_{\epsilon}$, any initial distribution $\mu_0$ and any function $f:\N^*\rightarrow\R_+$,
\begin{align*}
\E\left|\rho(f)-\E_{\mu_0}\left(f(X_t)\mid t<\tau_\d\right)\right|\leq 2\gamma^{\lfloor t\rfloor}\|f\|_{\infty}.
\end{align*}
But, for any $t\geq 0$, \cite{Villemonais2013} implies that
\begin{align*}
\E\left|\mu^N_t(f)-\E_{\mu^N_0}\left(f(X_t)\mid t<\tau_\d\right)\right|\leq \frac{2(1+\sqrt{2})e^{d_1 t}\|f\|_{\infty}}{\sqrt{N}}.
\end{align*}
As a consequence,
\begin{align*}
\E\left|\mu^N_t(f)-\rho(f)\right|\leq \frac{2(1+\sqrt{2})e^{d_1 t}\|f\|_{\infty}}{\sqrt{N}}+2\gamma^{\lfloor t\rfloor}\|f\|_{\infty}.
\end{align*}
In particular, for any $\epsilon>0$, there exists $t_\epsilon$ and $N_\epsilon$ such that
\begin{align*}
\E\left|\mu^N_t(f)-\rho(f)\right|\leq \epsilon\|f\|_{\infty},\ \forall N\geq N_\epsilon,\ t\geq t_{\epsilon}.
\end{align*}
But $\mu^N_t$ converges in law to $\cX^N$, so that
\begin{align*}
\E\left|\cX^N(f)-\rho(f)\right|\leq \epsilon\|f\|_{\infty},\ \forall N\geq N_\epsilon.
\end{align*}
This inequality being true for any $\epsilon>0$, this concludes the proof of Theorem~\ref{thm:main-result} under Assumption~(H2).

\section{Numerical simulation of the Fleming-Viot type particle system}
In this section, we present numerical simulations
 of the Fleming-Viot particle system studied in Section~\ref{sec:FV}. Namely, we focus on the distance in total variation norm between the expectation of the empirical stationary distribution (\textit{i.e.} $\E({\cal X}^N)$) and the minimal \QSD{} of the underlying Markov process $X$, when $N$ goes to infinity. This means that we aim at studying the bias of the approximation method.

\me
We start with the linear birth and death process case in Subsection~\ref{sec:LiBDsim}. This is one of the rare situation where explicit computation of the minimal quasi-stationary distribution can be performed (see for instance~\cite{MV12}). In Subsection~\ref{sec:LoBDsim}, we provide the results of numerical simulations in the logistic  birth and death case.

\subsection{The linear birth and death case}
\label{sec:LiBDsim}
We assume in this section that $b_i=i$ and $d_i=2i$ for all $i\geq 0$. This is a sub-case of Example~1 and thus one can apply Theorem~\ref{thm:main-result}: the empirical stationary distribution of the process ${\cal X}^N$ exists and converges in law, when the number $N$ of particles goes to infinity, to the minimal quasi-stationary distribution $\rho$ of the process, which is known to be given by (see~\cite{MV12})
\begin{align*}
\rho(i)=\frac{1}{2^i},\ \forall i\geq 1.
\end{align*}

\me
The results of the numerical estimations of $\|\E({\cal X}^N)-\rho\|_{TV}$ for different values of $N$ (from $2$ to $10^4$) are reproduced on Table~\ref{tab:LiBD}. One interesting point is the confirmation that $\E({\cal X}^N)$ is a biased estimator of $\rho$. A second interesting point is that the bias decreases quickly when $N$ increases. Up to our knowledge, there exists today no theoretical justification of this fact, despite its practical implications. Indeed, one drawback of the speed of the numerical simulation is the interaction between the particles of the Fleming-Viot system: more particles in the system leads to more interaction and thus more communication between processors, which at the end slows down the simulation. A crucial optimisation problem for the approximation method is thus to keep the number of particles as small as possible. In our linear birth and death case, the numerical simulations suggest that the bias decreases as~$O(N^{-1})$.

\renewcommand{\arraystretch}{1.5}
\begin{table}[ht]
\centering
   \begin{tabular}{|c|c|c|}
       \hline
       Nb of particles & $\displaystyle \left\|\E\left({\cal X}^N\right)-\rho\right\|_{TV}$ & Estimated error\\
       \hline
       $N=2$ & $0.190$ & $\pm 10^{-3}$\\
       $N=10$ & $4.5\times 10^{-2}$ & $\pm 10^{-3}$\\
       $N=10^2$ & $5.0\times 10^{-3}$ & $\pm 10^{-4}$\\
       $N=10^3$ & $5.1\times 10^{-4}$ & $\pm 10^{-5}$\\
       $N=10^4$ & $2.3\times 10^{-5}$ & $\pm 10^{-5}$\\
	   \hline
   \end{tabular}
   \caption{\label{tab:LiBD} Estimation of the bias $\|\E({\cal X}^N)-\rho\|_{TV}$ for a linear birth and death process. In the rightmost column, "Estimated error" is the order of magnitude of the error that is made when computing $\|\E({\cal X}^N)-\rho\|_{TV}$.}
\end{table}

\subsection{The logistic birth and death case}
\label{sec:LoBDsim}
We consider now the case where $b_i=2i$ and $d_i=i+i(i-1)$, for all $i\geq 1$. The existence and uniqueness of a \QSD{} $\rho$ is well known for this process, but no explicit formula for the probability measure $\rho$ exists. Thus, in order to compute numerically the total variation distance $\|\E({\cal X}^N)-\rho\|_{TV}$ for different values of $N$, we use the approximation $$\rho\simeq \E({\cal X}^{N_0})\text{, where }N_0=10^4.$$ The histogram of the estimated quasi-stationary distribution is represented on Figure~\ref{fig:estimated-rho}.
\begin{figure}[ht]
\begin{center}
\includegraphics[width=10cm]{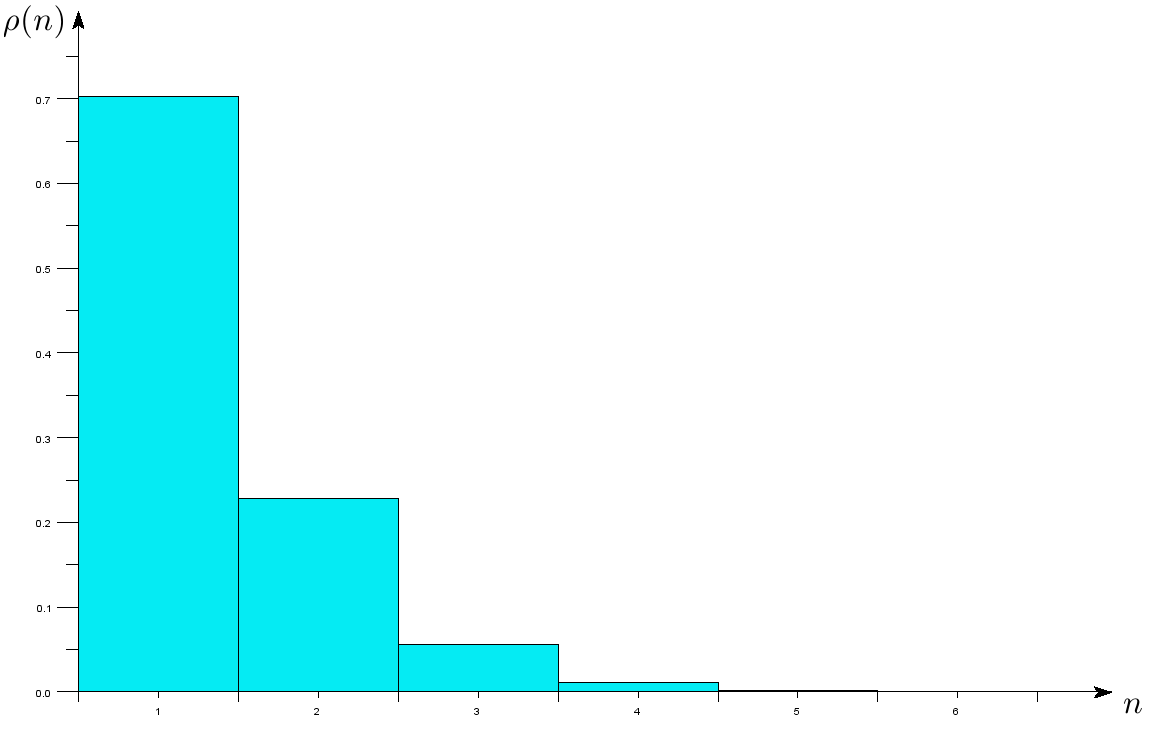}
\end{center}
\caption{\label{fig:estimated-rho} Estimated value of the minimal \QSD{} $\rho(n)$ for a logistic birth and death process.}
\end{figure}

\noindent The results of the numerical estimations of the bias $\|\E({\cal X}^N)-\rho\|_{TV}$ are reproduced on Table~\ref{tab:LoBD}. The conclusion is the same as in the linear birth and death case : $\|\E({\cal X}^N)-\rho\|_{TV}$ declines very sharply as a function of $N$. In fact, the phenomenon is even more spectacular, since the estimated value of $\left\|\E\left({\cal X}^N\right)-\rho\right\|_{TV}$ is $2.0\times 10^{-2}$, even for $N=2$.

\renewcommand{\arraystretch}{1.5}
\begin{table}[ht]
\centering
   \begin{tabular}{|c|c|c|}
       \hline
       Nb of particles & $\displaystyle \left\|\E\left({\cal X}^N\right)-\rho\right\|_{TV}$ & Estimated error\\
       \hline
       $N=2$ & $2.0\times 10^{-2}$ & $\pm 10^{-3}$\\
       $N=10$ & $3.0\times 10^{-3}$ & $\pm 10^{-4}$\\
       $N=10^2$ & $3.6\times 10^{-4}$ & $\pm 10^{-5}$\\
       $N=10^3$ & $2\times 10^{-5}$ & $\pm 10^{-5}$\\
       $N=10^4$ & $***$ & $\pm 10^{-5}$\\
	   \hline
   \end{tabular}
   \caption{\label{tab:LoBD} Estimation of the bias $\|\E({\cal X}^N)-\rho\|_{TV}$ for a logistic birth and death process. In the rightmost column, "Estimated error" is the order of magnitude of the error that is made when computing $\|\E({\cal X}^N)-\rho\|_{TV}$. On the last line, there is no input for $N=N_0=10^4$ because $\rho\simeq \E({\cal X}^{N_0})$ is the probability measure used to compute $\|\E({\cal X}^N)-\rho\|_{TV}$.}
\end{table}

\bibliographystyle{plain}
\bibliography{biblio-bio,biblio-math,biblio-denis}

\end{document}